\documentclass[11pt]{article}

\usepackage[T1]{fontenc}
\usepackage[utf8]{inputenc}
\usepackage{lmodern}
\usepackage[margin=1.08in]{geometry}
\usepackage{amsmath,amssymb,amsthm,mathtools}
\usepackage{microtype}
\usepackage[colorlinks=true,linkcolor=blue,citecolor=blue,urlcolor=blue,pdftitle={Tur\'an-good monotonicity thresholds},pdfauthor={Yuanpei Wang, Liying Kang, and Xiaomiao Zhao}]{hyperref}

\allowdisplaybreaks
\renewcommand{\qedsymbol}{$\blacksquare$}
\numberwithin{equation}{section}

\newtheorem{theorem}{Theorem}[section]
\newtheorem{lemma}[theorem]{Lemma}

\theoremstyle{definition}

\newtheorem{problem}[theorem]{Problem}
\theoremstyle{remark}

\newcommand{\inj}{\operatorname{inj}}
\newcommand{\Inj}{\operatorname{Inj}}
\newcommand{\ex}{\operatorname{ex}}
\newcommand{\Aut}{\operatorname{Aut}}
\newcommand{\Col}{\operatorname{Col}}
\newcommand{\dist}{\operatorname{dist}}
\newcommand{\E}{\mathbb E}
\newcommand{\Prb}{\mathbb P}
\newcommand{\cP}{\mathcal P}
\newcommand{\cE}{\mathcal E}

\title{%
	\fontsize{20}{24}\selectfont
	\bfseries
	Tur\'an-good monotonicity thresholds%
}
\author{%
Yuanpei Wang\textsuperscript{*}
\qquad
Liying Kang\textsuperscript{\(\dagger\)}
\qquad
Xiamiao Zhao\textsuperscript{\(\ddagger\)}%
}
\date{}

\begin{document}

\maketitle
\begingroup
\renewcommand{\thefootnote}{\fnsymbol{footnote}}
\footnotetext[1]{Department of Mathematics, Shanghai University, Shanghai 200444, P.R. China. Supported by the China Scholarship Council (No. 202506890067). Email: boyuan@shu.edu.cn.}
\footnotetext[2]{Department of Mathematics, Shanghai University, Shanghai 200444, P.R. China, and Newtouch Center for Mathematics of Shanghai University, Shanghai 200444, P.R. China. Supported by the National Natural Science Foundation of China (Grant Nos. 12571375 and 12331012). Email: lykang@shu.edu.cn.}
\footnotetext[3]{Corresponding author. Department of Mathematical Sciences, Tsinghua University, Beijing 100084, P.R. China. Supported by the China Scholarship Council (No. 202506210250) and the National Natural Science Foundation of China (Grant No. 12571372). Email: zxm23@mails.tsinghua.edu.cn.}
\endgroup

\begin{abstract}
A graph $H$ is $K_{r+1}$-Tur\'an-good if, for every sufficiently large $n$, the Tur\'an graph $T_r(n)$ maximizes the number of copies of $H$ among all $n$-vertex $K_{r+1}$-free graphs. It is strictly $K_{r+1}$-Tur\'an-good if $T_r(n)$ is the unique extremal graph.

Morrison, Nir, Norin, Rz\k{a}\.zewski and Wesolek [\emph{JCTB}, 2023] proved that every graph $H$ is $K_{r+1}$-Tur\'an-good whenever $r\ge 300v(H)^9$. They raised the following two questions:
\begin{enumerate}
\item Can the sufficient condition $r\ge 300v(H)^9$ be reduced to a condition of quadratic order in $v(H)$?
\item Is the Turán-good property monotone in $r$? More precisely, if a graph $H$ is $K_r$-Turán-good, must it also be $K_{r+1}$-Turán-good?
\end{enumerate}

We affirmatively resolve the first question and derive an even stronger bound linear in the edge number: every graph $H$ with at least one edge is strictly $K_{r+1}$-Tur\'an-good and $K_{r+1}$-Tur\'an-stable whenever $r\ge 168e(H)$. This condition is quadratic in $v(H)$ for arbitrary graphs and linear in $v(H)$ for every sparse graph family with $e(H)=O(v(H))$.

We answer the second question negatively. For every $r\ge3$, there exists a graph that is strictly $K_r$-Tur\'an-good but not $K_{r+1}$-Tur\'an-good. More quantitatively, for every sufficiently large $h$, there exists a graph $H$ with $v(H)\le h$ and an integer $r=h-2\sqrt h+O(1)$ such that $H$ is strictly $K_r$-Tur\'an-good but not $K_{r+1}$-Tur\'an-good.

The monotonicity threshold $\lambda(H)$ is the least integer $R\ge 2$ such that, for every $r\ge R$, the graph $H$ is $K_{r+1}$-Tur\'an-good whenever it is $K_r$-Tur\'an-good. For
\[
\lambda_{\max}(h)=\max\{\lambda(H)\mid v(H)\le h\},
\]
our two results yield
\[
h-2\sqrt h-O(1)\le \lambda_{\max}(h)\le 84h^2.
\]
\end{abstract}

\noindent\textbf{Keywords.} Generalized Tur\'an problem, Tur\'an-good graph, monotonicity threshold, stability.\\
\textbf{Mathematics Subject Classification.} 05C35.

\section{Introduction}

Throughout the paper, all graphs are finite and simple. For a graph $G$, let $V(G)$ and $E(G)$ denote its vertex and edge sets, and let $v(G):=|V(G)|$ and $e(G):=|E(G)|$. For $u\in V(G)$, let $N_G(u)$ be the \emph{neighborhood} of $u$ and let $d_G(u):=|N_G(u)|$ be its \emph{degree}. Let $\delta(G)$, $\Delta(G)$, and $\chi(G)$ denote the \emph{minimum degree}, \emph{maximum degree}, and \emph{chromatic number} of $G$, respectively.

For fixed graphs $H$ and $F$, the \emph{generalized Tur\'an number} is
\[
\ex(n,H,F):=\max\{N(H,G):|V(G)|=n,\ G\text{ is }F\text{-free}\},
\]
where $N(H,G)$ denotes the number of unlabeled copies of $H$ in $G$. The generalized Tur\'an problem was introduced by Alon and Shikhelman \cite{AlonShikhelman}. For further results, we refer to the survey of Gerbner and Palmer \cite{GerbnerPalmerSurvey}. The case $H=K_2$ is the classical Tur\'an problem.

Let $T_r(n)$ denote the \emph{Tur\'an graph}, the complete $r$-partite graph on $n$ vertices whose part sizes differ by at most one. Following Gerbner and Palmer \cite{GerbnerPalmerCounting,GerbnerPalmerExact}, a graph $H$ is \emph{$K_{r+1}$-Tur\'an-good} if $\ex(n,H,K_{r+1})=N(H,T_r(n))$ for every sufficiently large $n$. A graph attaining this maximum is called an \emph{extremal graph}. If $T_r(n)$ is the unique extremal graph for every sufficiently large $n$, then $H$ is \emph{strictly} $K_{r+1}$-Tur\'an-good. Zykov's theorem \cite{Zykov} shows that every clique $K_t$ with $t\le r$ is strictly $K_{r+1}$-Tur\'an-good. Further exact and stability results for several graph classes were obtained by Ma and Qiu \cite{MaQiu}.

An \emph{injective homomorphism} from $H$ to $G$ is an injective map $\varphi:V(H)\to V(G)$ such that $uv\in E(H)$ implies $\varphi(u)\varphi(v)\in E(G)$. Let $\Inj(H,G)$ be the set of all such maps, and let $\inj(H,G):=|\Inj(H,G)|$. Since $\inj(H,G)=|\Aut(H)|N(H,G)$, where $\Aut(H)$ is the \emph{automorphism group} of $H$, maximizing $\inj(H,G)$ is equivalent to maximizing $N(H,G)$. For a nonnegative integer $a$ and an integer $b\ge 0$, define $(a)_b:=a(a-1)\cdots(a-b+1)$.

The \emph{edit distance} of two $n$-vertex graphs $G$ and $G'$ is the minimum number of edges that need to be added to or deleted from $G$ to obtain a graph isomorphic to $G'$. A graph $H$ is \emph{$K_{r+1}$-Tur\'an-stable} if, for every $\varepsilon>0$, there are $\delta>0$ and $n_0$ such that every $n\ge n_0$ and every $n$-vertex $K_{r+1}$-free graph $G$ satisfying $N(H,G)\ge \ex(n,H,K_{r+1})-\delta n^{v(H)}$ has edit distance at most $\varepsilon n^2$ from $T_r(n)$.

Gerbner and Palmer \cite{GerbnerPalmerCounting,GerbnerPalmerExact} conjectured that every graph is eventually Tur\'an-good. Morrison, Nir, Norin, Rz\k{a}\.zewski and Wesolek~\cite{MorrisonEtAl} proved the following theorem.

\begin{theorem}[Morrison, Nir, Norin, Rz\k{a}\.zewski and Wesolek~\cite{MorrisonEtAl}]
\label{thm:MNNRW}
Let $H$ be a graph and let $r\ge 300v(H)^9$. Then $H$ is $K_{r+1}$-Tur\'an-good.
\end{theorem}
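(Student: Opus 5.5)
The plan is to work with injective homomorphism counts $\inj(H,G)$ and argue in three stages: a stability statement, a cleaning step, and an exact local-improvement step.

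\textbf{Stage 1: the extremal problem.} Let $G$ be an extremal $n$-vertex $K_{r+1}$-free graph, with $n$ large. Then $\inj(H,G)\ge \inj(H,T_r(n))$. We write $\inj(H,G)$ as a sum over maps $V(H)\to V(G)$ and compare it with the Tur\'an graph through a vertex-weighting (Lagrangian-type) argument. When $r$ is large compared with $v(H)$, a uniformly random injective map from $V(H)$ into $T_r(n)$ fails to be a homomorphism only with probability about $e(H)/r$. So $\inj(H,T_r(n))\approx (1-1/r)^{e(H)} n^{v(H)}$.

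\textbf{Stage 2: stability.} First we use symmetrization (Zykov-style) on the homomorphism density. For large $r$, the density of $H$ in a $K_{r+1}$-free graph with edge density $p$ is controlled by $p^{e(H)}$ up to lower-order error. This error is handled by a supersaturation or removal lemma plus a Kruskal--Katona/Lov\'asz-type bound relating counts of $H$ to edges. From this we deduce that $e(G)\ge (1-1/r-o(1))n^2/2$. The Erd\H{o}s--Simonovits stability theorem then shows that $G$ is $o(n^2)$-close to $T_r(n)$.

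\textbf{Stage 3: cleaning.} Next we do the standard cleaning. Take a max-cut $r$-partition $V_1,\dots,V_r$. Show that every vertex has few neighbours in its own part. Otherwise, deleting it and cloning a typical vertex increases $\inj(H,G)$, since the number of copies of $H$ through a vertex $v$ is a smooth increasing function of its degree profile. Then use $K_{r+1}$-freeness: a vertex with linear degree into every part would create a $K_{r+1}$. This gives $G\subseteq$ complete $r$-partite, and hence $G$ is complete $r$-partite.

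\textbf{Stage 4: balancing.} Finally, we show that the parts are balanced. Moving one vertex from a larger part to a smaller part changes $\inj$ by a polynomial expression in the part sizes. We must show it is positive whenever the sizes differ by at least $2$.

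\textbf{Main obstacle.} The hardest step is this last balancing claim, together with the Stage 2 inequality, for general $H$. The number of homomorphic copies of $H$ in a complete $r$-partite graph is $\sum_{\text{proper colourings}}\prod(\text{part sizes})$. Showing that this is Schur-concave in the part sizes requires $r$ to be large relative to $H$. Here I would expand in the chromatic polynomial and bound error terms crudely by $v(H)^{O(1)}/r$, which is where a polynomial condition like $r\ge 300v(H)^9$ would emerge.
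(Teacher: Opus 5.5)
The paper does not reprove this cited result. It follows from Theorem~\ref{thm:main}, since $168e(H)\le 84v(H)^2\le 300v(H)^9$ (edgeless $H$ is trivial).

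\textbf{The gap is Stage~2.} That is where all the difficulty lies, and your Stage~3 depends on it.

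First, the inequality you invoke is false: the $H$-density of a $K_{r+1}$-free graph of edge density $p$ need not be at most $p^{e(H)}$ up to lower-order error. For $H=K_{1,2}$ the density is $\sum_v d(v)^2/n^3\ge p^2$ by Cauchy--Schwarz. For large $r$, the complete $r$-partite graph with one part of size $n/2$ and the other parts equal has cherry density about $5/8$, while $p^2\approx 9/16$.

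More fundamentally, estimating through edge density is only accurate to first order. Writing $q$ for the nonedge density, $\inj(H,G)/(n)_{v(H)}=1-e(H)q+(\text{second-order terms})$. The second-order terms depend on the degree sequence and on how nonedges overlap, not on $q$ alone. On the Tur\'an side, $p_r(H)=1-e(H)/r+O(e(H)^2/r^2)$.

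Comparing the two therefore gives only $q\le\frac1r\bigl(1+O(e(H)/r)\bigr)$. This is exactly what the paper proves in Lemma~\ref{lem:nonedge-density}, by a second-moment argument that already needs a minimum-degree bound. It means $e(T_r(n))-e(G)=O(e(H)n^2/r^2)$, a positive fraction of $n^2$, not $o(n^2)$. You cannot do better this way, because the uncontrolled second-order terms are of the same size as the gap you would need to close.

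Erd\H{o}s--Simonovits (in F\"uredi's form, Theorem~\ref{thm:Furedi}) then gives only $O(e(H)n^2/r^2)$ edits to an $r$-partite graph. Two parts span only about $n^2/r^2$ crossing pairs, so that many missing crossing edges could empty several pairs of parts. Your Stage~3 arguments (a vertex with linear degree into every part creates a $K_{r+1}$; cloning justified by a ``degree profile'') therefore cannot start.

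Zykov symmetrization does not rescue this. For non-clique $H$ it controls $\inj$ only up to $O(n^{v(H)-2})$ per step, since copies through both swapped vertices are not accounted for. It gives local information (the paper uses it for a minimum-degree bound), not multipartite structure.

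\textbf{The missing idea: skip $o(n^2)$-stability and argue linearly in the defect.} This is how the paper proceeds. Let $m=e(H)$ and $\Delta=\Delta(H)$.

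Lemma~\ref{lem:calibration} shows $\delta(G)\ge\bigl(1-\frac{31m}{10r\Delta}-o(1)\bigr)n$ by combining symmetrization with induction on $e(H)$. If $d_G(v)=(1-\eta)n$, the copies of $H$ sending $u$ to $v$ number at most $\min\{p_r(H-u),(1-\eta)^{d_H(u)}\}n^{v(H)-1}+o(n^{v(H)-1})$; the first bound uses the theorem for $H-u$.

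Take a max-cut $r$-partite subgraph $G_0$ with partition $\cP$. Let $L$ count edges of $G$ inside parts, $M$ count crossing nonedges, and $B=e(T_r(n))-e(K_{\cP})$. Theorem~\ref{thm:Furedi} gives $2L\le B+M$.

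Each of the $M$ missing crossing edges, and each unit of $B$, is worth at least $2m(1-O(m/r))n^{v(H)-2}$ copies (Lemmas~\ref{lem:completion} and~\ref{lem:balancing}). Each internal edge is worth at most $2mn^{v(H)-2}$. The factor $2$ in F\"uredi's inequality leaves a net gain of order $m(B+M)n^{v(H)-2}$. This forces $B=M=L=0$ directly, and the same accounting yields stability afterwards.

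Finally, the step you single out as the main obstacle, balancing, is the routine part there. One-edge sets $W$ contribute exactly $2d(\ell-2)_{|W|-2}$, and all other terms are controlled by a second-difference bound (Lemma~\ref{lem:two-part-finite-difference}). This is needed only for part sizes at most $(\gamma+o(1))n$, so no global Schur-concavity is required.
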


In the same work, they raised the following two problems concerning the dependence on $r$.

\begin{problem}
\label{prob:quadratic-threshold}
Can the sufficient condition $r\ge 300v(H)^9$ in Theorem \ref{thm:MNNRW} be replaced by a condition of quadratic order in $v(H)$?
\end{problem}

\begin{problem}
\label{prob:monotonicity}
Fix a graph $H$ and suppose that $H$ is $K_r$-Tur\'an-good. Does it follow that $H$ is also $K_{r+1}$-Tur\'an-good?
\end{problem}

Our first result answers Problem \ref{prob:quadratic-threshold} affirmatively and gives a stronger bound in terms of the number of edges. Under the same hypothesis $r\ge 300v(H)^9$, Gerbner and Hama Karim \cite{GerbnerHamaKarim} proved that every graph $H$ with at least one edge is $K_{r+1}$-Tur\'an-stable. Chen and Liu \cite{ChenLiu} subsequently strengthened this stability result and extended it to hypergraphs.

Every edgeless graph is trivially $K_{r+1}$-Tur\'an-good, but not strictly so. Isolated vertices may be removed without changing the relevant extremal conclusions. Indeed, if $H=H_0\cup tK_1$, where $H_0$ has no isolated vertices, then
\[
\inj(H,G)=(n-v(H_0))_t\,\inj(H_0,G)
\]
for every $n$-vertex graph $G$. The factor is independent of $G$, and one of $H,H_0$ is $K_{r+1}$-Tur\'an-stable if and only if the other is.

Morrison, Nir, Norin, Rz\k{a}\.zewski and Wesolek \cite{MorrisonEtAl} observed that improving the bound in Theorem \ref{thm:MNNRW} to quadratic order in $v(H)$ would likely require additional ideas. The following theorem replaces $300v(H)^9$ by $168e(H)$ and simultaneously gives strictness and stability.

\begin{theorem}
\label{thm:main}
For every graph $H$ with at least one edge and every integer $r\ge 168e(H)$, the graph $H$ is strictly $K_{r+1}$-Tur\'an-good and $K_{r+1}$-Tur\'an-stable.
\end{theorem}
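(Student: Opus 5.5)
We reduce to $H$ without isolated vertices, using the identity $\inj(H,G)=(n-v(H_0))_t\inj(H_0,G)$; this does not increase $e(H)$. Let $h=v(H)\le 2e(H)$ and $m=e(H)$. The plan has three stages: an asymptotic (Lagrangian) optimization, a stability argument, and an exact cleanup via symmetrization and degree conditions.

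\emph{Step 1: asymptotics.} Any $K_{r+1}$-free $G$ can be compared, via the removal lemma and Zykov-type symmetrization, with a complete multipartite graph with at most $r$ parts. For part-weight vector $x=(x_1,\dots,x_r)$, the normalized count $t(H,x)=\inj(H,G)/n^h$ equals the probability that $h$ i.i.d.\ vertices sampled by $x$ give a proper colouring of $H$. By a union bound over edges, $1-t(H,x)\le \sum_{uv\in E(H)}\Prb[\text{same part}]=m\|x\|_2^2$, and an inclusion--exclusion lower bound gives the matching second-order estimate. The balanced vector has $\|x\|_2^2=1/r$. Perturbing $x=\frac1r\mathbf 1+y$ with $\sum y_i=0$ changes the first-order loss by $m\|y\|_2^2$. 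The higher-order terms, coming from pairs and triples of edges, are bounded by $O(m^2\|x\|_3^3)$-type quantities, which are at most $C\,m\cdot (m/r)\|y\|^2$. When $r\ge 168m$, these corrections are dominated, so the balanced vector is the unique maximizer with a quadratic drop. Fewer than $r$ parts is handled the same way, since that case corresponds to large $\|y\|$. This yields Tur\'an-stability: near-optimal graphs are $o(n^2)$-close to complete $r$-partite graphs with almost balanced parts.

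\emph{Step 2: exact result.} Take an extremal $G$, which is close to $T_r(n)$ by Step~1. Use the standard local argument. First, vertex-by-vertex replacement: cloning a vertex of maximal $H$-degree onto a vertex of minimal $H$-degree preserves $K_{r+1}$-freeness. This shows all $H$-degrees are nearly equal. Hence every vertex has at most $\epsilon n$ neighbours missing into the other parts and few neighbours inside its own part. Next, a vertex $v$ with a neighbour in its own class lies in few copies of $H$. The key estimate is that the number of copies of $H$ through $v$ is governed by $\sum_{u\in V(H)}d_H(u)$-weighted probabilities. Losing a $1/r$ fraction of the available colours for $v$'s neighbours costs $\Theta(n^{h-1}/r)$ copies, while an edge inside a class gains only $O(m n^{h-2})$-ish through that edge. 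This shows $G$ is complete $r$-partite. Finally, among complete $r$-partite graphs, Step~1's strict concavity, made exact by a discrete swap argument (move a vertex from the larger part to the smaller), gives that $T_r(n)$ is the unique maximizer.

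\emph{Main obstacle.} The crux is Step~1's quantitative control: proving that the balanced point is the unique global maximizer of $t(H,x)$ on the simplex whenever $r\ge 168m$. The bound must be linear in $m$ and must not depend on $v(H)^{\text{poly}}$. I would write $t(H,x)=\sum_{S\subseteq E(H)}(-1)^{|S|}\prod_{\text{components}}\|x\|_{c}^{c}$ and truncate at $|S|\le 2$ with Bonferroni inequalities. Near-balanced $x$ then give a concave quadratic in $y$ with curvature $\ge m(1-c\,m/r)$. Far-from-balanced $x$ are handled by the crude estimate $t\le \exp(-m\|x\|_2^2+\dots)$ versus $t(\text{balanced})\ge 1-m/r$. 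The constant $168$ would emerge from matching these two regimes.
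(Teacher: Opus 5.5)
\paragraph{The gap is in Step~1.} It sits in the first sentence of Step~1, and everything downstream depends on it. Zykov symmetrization is special to cliques. Two nonadjacent vertices never lie in a common copy of $K_t$, so cloning one onto the other changes the count by exactly the difference of their rooted counts, and this is what makes nonadjacency an equivalence relation in an optimum. For general $H$, nonadjacent vertices do share copies.

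The comparison you assert is that $\inj(H,G)/n^h\le\max_x t(H,x)+o(1)$ over the simplex. This is false in general. For $H=C_5$ and $r=2$, balanced blow-ups of $C_5$ are triangle-free and contain $\Theta(n^5)$ copies of $C_5$, while every complete bipartite graph contains none. The removal lemma plays no role here, since $G$ is already $K_{r+1}$-free.

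Your reduction is really an asymptotic form of weak $K_{r+1}$-Tur\'an-goodness, which is a genuine property of the pair $(H,r)$. Proving it when $r\ge168e(H)$ is the heart of the theorem, and your Lagrangian analysis only addresses what is left afterwards. Even if you grant it, symmetrization replaces $G$ by a different graph. Uniqueness of the maximizer on the simplex therefore says nothing about the structure of a near-extremal $G$ itself, so the stability claim that Step~2 relies on is unsupported.

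The far-from-balanced part of your simplex analysis is also not in place. Without corrections, $t\le\exp(-m\|x\|_2^2)$ fails for $H=K_{1,m}$ with $m\ge3$, $x_1=1/2$ and the remaining mass spread evenly over the other $r-1$ parts. There $t\approx1/2$, but $\exp(-m\|x\|_2^2)\le e^{-m/4}$. So the omitted terms are not lower order in that regime.

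\paragraph{How the paper avoids this.} It never reduces to complete multipartite graphs or to a Lagrangian. It works directly with an extremal $G$ and inducts on $e(H)$.
\begin{enumerate}
\item Cloning a vertex of maximum rooted count onto an arbitrary vertex, combined with the inductive bound $I_{G,u}(v)\le p_r(H-u)n^{h-1}+O(n^{h-2})$, gives $\delta(G)\ge(1-\frac{31m}{10r\Delta}-o(1))n$. Without the inductive bound, the same cloning argument only yields a degree deficiency of order $hm/(r\Delta)$.
\item A second-moment argument then gives $q(G)\le\frac1r(1+\frac{29m}{4r})+o(1)$.
\item The $r$-partite structure comes from F\"uredi's theorem, which yields a maximum $r$-partite subgraph $G_0$ with $2L\le B+M$.
\item The completion and balancing lemmas show that every edge added in turning $G_0$ into $T_r(n)$ gains at least $2m(1-\frac{1679m}{20r}-o(1))n^{h-2}$ copies. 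Each of the $L$ deletions costs at most $2mn^{h-2}$, and together these force $B=M=L=0$.
\item Stability follows from the same inequalities after a cleaning step that uses only $o(n)$ clonings.
\end{enumerate}
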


Since $e(H)\le\binom{v(H)}2$, Theorem \ref{thm:main} gives a quadratic sufficient condition in $v(H)$. More strongly, for every graph family satisfying $e(H)=O(v(H))$, it gives a linear sufficient condition in the number of vertices.

Our second result answers Problem \ref{prob:monotonicity} negatively. For our constructions, the lower-level extremal property follows from the following theorem.

\begin{theorem}[Gy\H{o}ri, Pach and Simonovits~\cite{GyoriPachSimonovits}]
\label{thm:GPS}
Let $r\ge 3$, and let $H$ be an $(r-1)$-partite graph with $h>r-1$ vertices. Suppose that $H$ contains $\lfloor h/(r-1)\rfloor$ pairwise vertex-disjoint copies of $K_{r-1}$ and that any two vertices $x,y$ in the same connected component can be joined by a sequence $Q_1,\ldots,Q_s$ of copies of $K_{r-1}$ such that $x\in Q_1$, $y\in Q_s$, and $|Q_i\cap Q_{i+1}|=r-2$ for $1\le i<s$. Then $H$ is strictly $K_r$-Tur\'an-good.
\end{theorem}

For $r\ge 3$ and $t\ge 2$, Theorem \ref{thm:GPS} implies that $T_{r-1}((r-1)t)$ is strictly $K_r$-Tur\'an-good.

\begin{theorem}
\label{thm:multipartite-counterexample}
For every $r\ge 3$, there exists $t_0=t_0(r)\ge 2$ such that, for every $t\ge t_0$, the graph $T_{r-1}((r-1)t)$ is not $K_{r+1}$-Tur\'an-good.
\end{theorem}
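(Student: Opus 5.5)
The plan is to exhibit, for large $t$, an explicit $K_{r+1}$-free graph with more copies of $H:=T_{r-1}((r-1)t)$ than $T_r(n)$. The natural candidate is $T_{r-1}(n)$ itself: it is $K_r$-free, hence $K_{r+1}$-free. Since $v(H)=(r-1)t$ is fixed, it suffices to compare the leading coefficients of $\inj(H,T_{r-1}(n))$ and $\inj(H,T_r(n))$ as $n\to\infty$. The floors in the part sizes only affect lower-order terms in $n^{v(H)}$, so a strict inequality between the leading coefficients already gives $\ex(n,H,K_{r+1})>N(H,T_r(n))$ for all large $n$.

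\textbf{Counting via proper colourings.} For a complete $k$-partite graph $G$ with all parts of size $n/k+O(1)$, an injective homomorphism $H\to G$ is determined, up to lower order, by a proper $k$-colouring $c$ of $H$ (which part of $G$ receives each vertex) together with an injective choice of vertices inside each part. Hence
\[
\inj(H,G)=P_H(k)\,k^{-v(H)}\,n^{v(H)}+O(n^{v(H)-1}),
\]
where $P_H(k)$ is the number of proper $k$-colourings of $H$.

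\textbf{Evaluating $P_H$ at $k=r-1$ and $k=r$.} The graph $H$ is complete $(r-1)$-partite with parts of size $t$, so a proper colouring is exactly an assignment of pairwise disjoint nonempty colour sets to the $r-1$ parts. There are two cases.
\begin{itemize}
\item With $r-1$ colours, each part gets exactly one colour, so $P_H(r-1)=(r-1)!$.
\item With $r$ colours, either every part gets a single colour, giving $r!$ colourings, or exactly one part uses two colours. In the second case we choose that part ($r-1$ ways), its two colours ($\binom r2$ ways), a bijection of the remaining $r-2$ colours to the other $r-2$ parts ($(r-2)!$ ways), and a surjection of the part's $t$ vertices onto its two colours ($2^t-2$ ways).
\end{itemize}
This gives
\[
P_H(r)=r!+(r-1)\tfrac{r!}{2}\,(2^t-2)\le C_r\,2^t
\]
for a constant $C_r$ depending only on $r$.

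\textbf{Comparing the leading coefficients.} We need
\[
(r-1)!\,(r-1)^{-(r-1)t} > C_r\,2^t\,r^{-(r-1)t},
\]
which is equivalent to
\[
\Bigl(\bigl(\tfrac{r}{r-1}\bigr)^{r-1}\big/2\Bigr)^t > \frac{C_r}{(r-1)!}.
\]
The key point is that $(1+\tfrac1{r-1})^{r-1}>2$ for $r\ge3$, since the binomial expansion gives $1+1+\binom{r-1}{2}(r-1)^{-2}+\dots$. So the left side grows exponentially in $t$, and the inequality holds for all $t\ge t_0(r)$.

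For such $t$, $T_{r-1}(n)$ contains asymptotically more copies of $H$ than $T_r(n)$. Therefore $H$ is not $K_{r+1}$-Tur\'an-good, while Theorem~\ref{thm:GPS} shows it is strictly $K_r$-Tur\'an-good. The only delicate step is justifying the leading-order formula for $\inj$ with unequal floors, and this is routine because each part has size $n/k+O(1)$.
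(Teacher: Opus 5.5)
Your proposal is correct and follows the paper's proof: you compare $T_{r-1}(n)$ with $T_r(n)$ and count proper colourings of $T_{r-1}((r-1)t)$ exactly as the paper does, getting $(r-1)!$ colourings with $r-1$ colours and $(r-1)!\,[r+\binom r2(2^t-2)]$ with $r$ colours, and you use $(1+\frac1{r-1})^{r-1}>2$ so that the exponential comparison holds for large $t$. The only cosmetic difference is that the paper restricts to $n$ divisible by $r(r-1)$ so all parts are exactly equal, whereas you absorb the $O(1)$ part-size fluctuations into lower-order terms; both are valid.
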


Together with Theorem \ref{thm:GPS}, Theorem \ref{thm:multipartite-counterexample} shows that, for every $r\ge3$, there exists a graph that is strictly $K_r$-Tur\'an-good but not $K_{r+1}$-Tur\'an-good. Thus monotonicity can fail at every level $r\ge3$.

This leads naturally to the question of when monotonicity is eventually guaranteed. Define the \emph{monotonicity threshold} $\lambda(H)$ to be the smallest integer $R\ge2$ such that, for every $r\ge R$, if $H$ is $K_r$-Tur\'an-good, then it is $K_{r+1}$-Tur\'an-good, and define
\[
\lambda_{\max}(h):=\max\{\lambda(H):H\text{ is a graph and }v(H)\le h\}.
\]
Theorem \ref{thm:MNNRW} gives $\lambda(H)\le300v(H)^9$ and hence $\lambda_{\max}(h)\le300h^9$, while Theorem \ref{thm:main} improves this to $\lambda_{\max}(h)\le84h^2$.

Theorem \ref{thm:multipartite-counterexample} establishes failure at every level $r$, but it is not optimized for the order of the counterexample. To obtain a strong lower bound on $\lambda_{\max}(h)$, we next construct counterexamples in which $r$ is as large as possible relative to the number of vertices.

For $r\ge 3$ and $1\le t<r-1$, let $F_{r,t}$ have vertex set $C\cup Z$, where $C=\{y_1,\ldots,y_{r-1}\}$ induces $K_{r-1}$, the set $Z=\{z_1,\ldots,z_t\}$ is independent, and $N_{F_{r,t}}(z_i)=C\setminus\{y_i\}$.
The graph $F_{r,t}$ is also strictly $K_r$-Tur\'an-good by Theorem \ref{thm:GPS}.

\begin{theorem}
\label{thm:core-satellite}
For every $r\ge 17$, there exists $t_0=t_0(r)<r-1$ such that, for every $t$ with $t_0\le t<r-1$, the graph $F_{r,t}$ is not $K_{r+1}$-Tur\'an-good.
\end{theorem}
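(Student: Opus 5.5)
The plan is to beat $T_r(n)$ with an \emph{unbalanced} complete $r$-partite graph. Such a graph is automatically $K_{r+1}$-free, so it suffices to show that the balanced part sizes do not maximize the number of copies of $F_{r,t}$ among complete $r$-partite graphs on $n$ vertices. Let the parts $V_1,\dots,V_r$ have sizes $x_1n,\dots,x_rn$ with $\sum x_j=1$.

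\textbf{Step 1: the counting polynomial.} In a complete $r$-partite graph, an injective homomorphism of $F_{r,t}$ sends the clique $C$ to $r-1$ distinct parts. This leaves exactly one free part $V_j$. Each satellite $z_i$ must avoid the parts of $C\setminus\{y_i\}$, so it lies either in the part of $y_i$ or in $V_j$. Hence
\[
\inj(F_{r,t},G)=n^{r-1+t}\,P(x)+O(n^{r+t-2}),
\]
where
\[
P(x)=\sum_{\sigma}\ \prod_{k=1}^{r-1}x_{\sigma(k)}\ \prod_{i=1}^{t}\bigl(x_{\sigma(i)}+x_{j(\sigma)}\bigr).
\]
Here $\sigma$ ranges over injections $[r-1]\to[r]$, and $j(\sigma)$ is the unique index missed by $\sigma$. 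Because $P$ is symmetric, its gradient at $u=(1/r,\dots,1/r)$ is parallel to $(1,\dots,1)$, so $u$ is a critical point of $P$ on the simplex. It therefore suffices to show that $u$ is not a local maximum. Concretely, I will show that the second derivative of $P(u+s(e_1-e_2))$ at $s=0$ is positive once $t\ge t_0(r)$. Then for a suitable fixed small $s\neq0$, the complete $r$-partite graph with sizes $(u+s(e_1-e_2))n$ (rounded to integers) has $P$-value larger than $P(u)$ by a constant. Since $N(F_{r,t},T_r(n))=n^{r-1+t}P(u)/|\Aut(F_{r,t})|+O(n^{r+t-2})$, this beats $T_r(n)$ for large $n$.

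\textbf{Step 2: the second-order computation.} This is the main obstacle. Each term of $P$ is a product of linear forms in $x$. For a product $\prod_\ell L_\ell$, the second directional derivative equals the value times
\[
\Bigl(\sum_\ell \frac{L'_\ell}{L_\ell}\Bigr)^2-\sum_\ell\Bigl(\frac{L'_\ell}{L_\ell}\Bigr)^2 .
\]
At $u$, each clique factor equals $1/r$ and each satellite factor equals $2/r$. The direction $d=e_1-e_2$ contributes $\pm1$ to $L'_\ell$ exactly when the relevant part is $V_1$ or $V_2$. So I will classify the $\sigma$'s according to the roles that $V_1$ and $V_2$ play:
\begin{itemize}
\item the part of a satellite-carrying clique vertex $y_i$ with $i\le t$;
\item the part of a clique vertex $y_k$ with $k>t$;
\item the free part.
\end{itemize}
For each class I count $\sigma$ and evaluate the bracket. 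The key heuristic is that when $V_1$ is free, every satellite factor gains $d_1=1$. This gives a term of order $\bigl(t\cdot\tfrac{r}{2}\bigr)^2$, which is quadratic in $t$. The negative contributions are the subtracted sums of squares and the cross terms from clique factors, and these are only linear in $t$, or quadratic with smaller constants. The free part has weight about $1/r$ of the classes, so the total second derivative should behave like $c_1t^2-c_2rt-c_3r^2$ up to the common factor. This is positive when $t\ge\gamma r$ for some constant $\gamma<1$.

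\textbf{Step 3: choosing $t_0$.} After the sums are evaluated, I will set $t_0(r)$ to be the least $t$ making the quadratic positive. I then check that $t_0(r)<r-1$, which is where the hypothesis $r\ge17$ should enter: for small $r$ the lower-order terms prevent $\gamma r<r-1$. If the direction $e_1-e_2$ turns out to be too weak, my fallback is the direction that enlarges one part against all others uniformly, namely $re_1-(1,\dots,1)$. This direction isolates the "big free part" effect, and the first-order heuristic $t>2(r-1)/(r-2)$, computed only from the terms whose free part is $V_1$, suggests that it is favorable.
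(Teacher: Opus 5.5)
Your proposal is the paper's proof: your $P$ is exactly the proper-coloring polynomial $\Phi_{F_{r,t},r}$, and the paper likewise shows that the second derivative along $e_1-e_2$ at the uniform point is positive, though it replaces your role-by-role case split with the identity that this derivative is a positive multiple of $r\mathcal Q-h(h+r-1)$, where $\mathcal Q=\E\sum_j N_j^2$ for a uniformly random proper $r$-coloring. Your case split, carried out, gives the same quantity $(r-4)t^2-(5r-12)t-4r^2+12r-8$ up to a positive factor, so the $t^2$ coefficient grows like $r$ and the threshold is about $2\sqrt r$ (the paper takes $t_0=\lceil 2\sqrt{r-1}+4\rceil$), not $\gamma r$ as your heuristic suggests; the fallback direction is also unnecessary, since by symmetry the Hessian at the barycenter acts as a scalar on $\{\sum_i w_i=0\}$.
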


Optimizing the counterexamples in Theorem \ref{thm:core-satellite} shows that monotonicity can fail as far as $h-2\sqrt h-O(1)$. Together with the upper bound above, this yields the following estimate.

\begin{theorem}
\label{thm:threshold-range}
For every sufficiently large integer $h$,
\[
h-2\sqrt h-O(1)\le \lambda_{\max}(h)\le 84h^2.
\]
\end{theorem}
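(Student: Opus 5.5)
The upper bound is a direct consequence of Theorem \ref{thm:main}. Fix a graph $H$ with $v(H)\le h$. If $H$ has no edges, then it is $K_{r+1}$-Tur\'an-good for every $r$, and so $\lambda(H)=2$. Otherwise $e(H)\le\binom{h}{2}<h^2/2$. Theorem \ref{thm:main} then shows that $H$ is $K_{r+1}$-Tur\'an-good for every $r\ge 168e(H)$, and therefore for every $r\ge 84h^2$. This makes the implication ``$K_r$-good $\Rightarrow$ $K_{r+1}$-good'' hold trivially for all $r\ge 84h^2$, so $\lambda(H)\le 84h^2$. Taking the maximum over $H$ gives $\lambda_{\max}(h)\le 84h^2$.

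For the lower bound I will use the graphs $F_{r,t}$ from Theorem \ref{thm:core-satellite}. Each $F_{r,t}$ is strictly $K_r$-Tur\'an-good by Theorem \ref{thm:GPS}. If $F_{r,t}$ is not $K_{r+1}$-Tur\'an-good, then the implication fails at level $r$, so $\lambda(F_{r,t})\ge r+1$. Since $v(F_{r,t})=r-1+t$, the task reduces to choosing $r$ as large as possible with $r-1+t_0(r)\le h$. Here $t_0(r)$ is the smallest admissible $t$ from Theorem \ref{thm:core-satellite}, and we may take $t=t_0(r)$. The statement of Theorem \ref{thm:core-satellite} only gives $t_0(r)<r-1$, which is far too weak. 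The real work is therefore to extract the quantitative bound $t_0(r)\le 2\sqrt r+O(1)$ from its proof.

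To get this bound, I would compare $T_r(n)$ with a competitor $K_{r+1}$-free graph. The natural choice is the complete $r$-partite graph with one part enlarged or shrunk, or alternatively $T_{r-1}$-like structures in which the satellites $z_i$ gain extra room. Both $\inj(F_{r,t},T_r(n))$ and $\inj(F_{r,t},G)$ would be expanded to first order in the perturbation. The clique $C$ of $F_{r,t}$ prefers balanced parts. Each satellite $z_i$, however, has $r-2$ neighbours in $C$ and must avoid the parts used by them, so it has two available parts (in $T_r$). This gives a gain from unbalancing that grows with $t$ (roughly linearly, with a binomial-type second-order term), set against a loss of order $r$ coming from the clique. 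The condition for the derivative to change sign should be a quadratic inequality in $t$ of the form $t^2\gtrsim 4r$, which yields $t_0=2\sqrt r+O(1)$.

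With this bound in hand, for large $h$ I choose $r$ maximal such that $r-1+2\sqrt r+O(1)\le h$. Solving gives $\sqrt r=\sqrt{h}-1+O(1/\sqrt h)$, hence $r=h-2\sqrt h-O(1)$, with $r\ge 17$ for large $h$. Setting $H=F_{r,t_0(r)}$, possibly padded with isolated vertices (which does not affect goodness), gives $\lambda_{\max}(h)\ge r+1\ge h-2\sqrt h-O(1)$. The main obstacle is the explicit estimate $t_0(r)\le 2\sqrt r+O(1)$. It requires redoing the perturbation computation from Theorem \ref{thm:core-satellite} carefully enough to track the constant $2$ and to ensure that the $O(1)$ error terms are uniform in $r$.
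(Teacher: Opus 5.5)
Your proposal is correct and follows the paper's route. The upper bound is Theorem~\ref{thm:main} combined with $e(H)\le\binom h2$, and the lower bound takes $r$ maximal with $r-1+t_0(r)\le h$ and uses $F_{r,t_0(r)}$, which gives $\lambda_{\max}(h)\ge r+1$.

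The quantitative input you flag as the main obstacle needs no redoing. It is already explicit in the paper's proof of Theorem~\ref{thm:core-satellite}, which sets $t_0(r)=\lceil 2\sqrt{r-1}+4\rceil$ using the second-order criterion $r\mathcal Q>h(h+r-1)$, that is, $(r-4)t^2-(5r-12)t>4(r-1)(r-2)$. A first-order expansion as in your sketch would detect nothing, because the uniform point is a critical point by symmetry.
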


The rest of the paper is organized as follows. Section \ref{sec:main-proof} proves Theorem \ref{thm:main}. Sections \ref{sec:multipartite-proof} and \ref{sec:core-satellite-proof} prove Theorems \ref{thm:multipartite-counterexample} and \ref{thm:core-satellite}, respectively. We finish with concluding remarks.

\section{Proof of Theorem \ref{thm:main}}
\label{sec:main-proof}

The proof uses the following theorem of F\"uredi to obtain an $r$-partite subgraph of a $K_{r+1}$-free graph.

\begin{theorem}[F\"uredi \cite{Furedi}]
\label{thm:Furedi}
Let $G$ be an $n$-vertex $K_{r+1}$-free graph. Then $G$ contains an $r$-partite subgraph $G_0$ such that
\[
e(G)-e(G_0)\le e(T_r(n))-e(G).
\]
The graph $G_0$ may be taken to be spanning by adding omitted vertices as isolated vertices.
\end{theorem}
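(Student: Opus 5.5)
The plan is to follow Erdős's degree-majorization argument as refined by Füredi. We build an ordered partition $V(G)=V_1\cup\dots\cup V_r$ greedily, take $G_0$ to be the $r$-partite subgraph of all edges of $G$ joining distinct parts, and bound the number of edges of $G$ lying inside the parts by a single double-counting inequality.

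\emph{Construction.} Set $W_1:=V(G)$. For $i=1,2,\dots$, while $W_i\neq\emptyset$, let $x_i$ be a vertex of maximum degree in $G[W_i]$. Put
\[
W_{i+1}:=W_i\cap N_G(x_i),\qquad V_i:=W_i\setminus W_{i+1}.
\]
By construction $x_1,\dots,x_i$ are pairwise adjacent, so $\{x_1,\dots,x_i\}$ is a clique. Moreover every vertex of $W_{i+1}$ is adjacent to all of them. Since $G$ is $K_{r+1}$-free, $W_{r+1}=\emptyset$. The sets $V_1,\dots,V_r$ (some possibly empty) therefore partition $V(G)$. Let $G_0$ consist of all edges of $G$ between distinct $V_i$; it is $r$-partite and spanning.

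\emph{Counting.} Note that $W_i=V_i\cup V_{i+1}\cup\dots\cup V_r$ and $d_{G[W_i]}(x_i)=|W_{i+1}|$. By maximality of the degree of $x_i$, every $v\in V_i$ satisfies $d_{G[W_i]}(v)\le |W_{i+1}|$, and hence
\[
\sum_{i=1}^r\sum_{v\in V_i} d_{G[W_i]}(v)\le \sum_{i=1}^r |V_i|\,|W_{i+1}|=\sum_{i<j}|V_i||V_j|\le e(T_r(n)).
\]
The last inequality holds because the middle sum is the edge count of a complete $r$-partite graph on $n$ vertices, which the Turán graph maximizes.

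Now evaluate the left side edge by edge. An edge $uv$ with $u\in V_i$, $v\in V_j$ and $i<j$ lies in $G[W_i]$. It is counted exactly once, at $u$: it is not counted at $v$, because $u\notin W_j$. An edge inside some $V_i$ is counted twice, once at each endpoint. Hence the left side equals $e(G)+\sum_i e(G[V_i])=e(G)+\bigl(e(G)-e(G_0)\bigr)$. Rearranging gives $e(G)-e(G_0)\le e(T_r(n))-e(G)$, as claimed.

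\emph{Main obstacle.} The only delicate point is verifying that each edge is counted with the right multiplicity in the double count. Crossing edges must be charged only to their endpoint in the earlier part, and the degrees must be measured in $G[W_i]$ rather than in $G$. Once the degrees are measured in $G[W_i]$, the maximal-degree choice of $x_i$ makes the bound immediate.
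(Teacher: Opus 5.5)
Your proof is correct and complete. The paper does not prove this statement; it cites F\"uredi. Your argument is essentially F\"uredi's original one: Erd\H{o}s's degree-majorization with the iterative choice of a maximum-degree vertex $x_i$ of $G[W_i]$, followed by the double count $2e(G)-e(G_0)\le\sum_{i<j}|V_i||V_j|\le e(T_r(n))$.
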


After deleting isolated vertices from $H$, write
\[
h:=v(H),\qquad m:=e(H),\qquad \Delta:=\Delta(H),
\]
and define
\[
J:=\sum_{u\in V(H)}d_H(u)(d_H(u)-1).
\]
We repeatedly use
\[
h\le 2m,\qquad J\le 2m(\Delta-1),\qquad J\le m(m-1),\qquad \Delta\le m.
\]
The first bound for $J$ follows from $d_H(u)-1\le \Delta-1$, while the second follows because $J$ counts ordered pairs of distinct intersecting edges. All asymptotic notation refers to $n\to\infty$ with $H$ and $r$ fixed. Constants in $O_{H,r}(\cdot)$ may depend on $H$ and $r$.

For a graph $F$, let $\Col_r(F)$ be the set of proper colorings of $F$ with labeled colors $1,\ldots,r$, and define
\[
p_r(F):=\frac{|\Col_r(F)|}{r^{v(F)}}.
\]
Thus $p_r(F)$ is the probability that a uniformly random $r$-coloring of $F$ is proper. Label the parts of $T_r(n)$ by $A_1,\ldots,A_r$, and write $a_i:=|A_i|=n/r+O_r(1)$. Every injective homomorphism $F\to T_r(n)$ induces a unique coloring $\varphi\in\Col_r(F)$ according to the parts containing the vertex images. Conversely, if $s_i(\varphi):=|\varphi^{-1}(i)|$, then exactly $\prod_{i=1}^r(a_i)_{s_i(\varphi)}$ injective homomorphisms induce $\varphi$. Since $\sum_{i=1}^r s_i(\varphi)=v(F)$, each such product is $r^{-v(F)}n^{v(F)}+O_{F,r}(n^{v(F)-1})$. Therefore
\begin{equation}
\label{eq:Turan-leading-term}
\inj(F,T_r(n))
=\sum_{\varphi\in\Col_r(F)}\prod_{i=1}^r(a_i)_{s_i(\varphi)}
=p_r(F)n^{v(F)}+O_{F,r}(n^{v(F)-1}).
\end{equation}

\subsection{The strict Tur\'an-good conclusion}

We prove the non-strict extremal inequality by induction on $m=e(H)$, and prove uniqueness whenever $m\ge1$. If $m=0$, then $H$ is edgeless and $\inj(H,G)=(n)_{v(H)}$ for every $n$-vertex graph $G$. Now assume that $m\ge1$ and that the inequality holds for every graph with fewer than $m$ edges. Whenever a graph $F$ arising in the induction has isolated vertices, write $F=F_0\cup tK_1$, where $F_0$ has no isolated vertices. Since $\inj(F,G)=(n-v(F_0))_t\inj(F_0,G)$ for every $n$-vertex graph $G$, it suffices to apply the induction hypothesis to $F_0$.

Assume that $r\ge 168m$, and let $G$ be an $n$-vertex $K_{r+1}$-free graph maximizing $\inj(H,G)$. We first bound the number of nonneighbors of each vertex of $G$. We then use this local estimate to bound the proportion of ordered pairs of distinct vertices that are nonedges of $G$.

\begin{lemma}
\label{lem:calibration}
We have
\[
\delta(G)\ge
\left(1-\frac{31m}{10r\Delta}-o(1)\right)n.
\]
\end{lemma}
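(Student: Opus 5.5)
We use the standard symmetrization (Zykov cloning) argument: compare a minimum-degree vertex with a vertex of near-maximal ``$H$-degree''. For a vertex $v$ of $G$, let $\inj_v(H,G)$ be the number of injective homomorphisms whose image contains $v$. Then
\[
\sum_v \inj_v(H,G)=h\,\inj(H,G),
\qquad\text{so some } w \text{ has } \inj_w(H,G)\ge h\,\inj(H,G)/n.
\]

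\textbf{Step 1: cloning lower bound.} Let $u$ be a vertex of minimum degree. Delete $u$ and add a twin $w'$ of $w$, that is, $w'$ is nonadjacent to $w$ and $N(w')=N(w)$. The resulting graph is still $K_{r+1}$-free. Copies of $H$ are lost only through $u$, and copies gained through $w'$ number $\inj_w(H,G)-O(n^{h-2})$. By maximality of $G$,
\[
\inj_u(H,G)\ge \inj_w(H,G)-O(n^{h-2})\ge h\,\inj(H,G)/n-O(n^{h-2}).
\]
Since $G$ is extremal, $\inj(H,G)\ge \inj(H,T_r(n))=p_r(H)n^h+O(n^{h-1})$ by \eqref{eq:Turan-leading-term}.

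\textbf{Step 2: upper bound on $\inj_u$ in terms of $d(u)=\delta$.} Write $\inj_u(H,G)=\sum_{x\in V(H)}N_x$, where $N_x$ counts maps sending $x$ to $u$. For such a map, the $d_H(x)$ neighbours of $x$ must land in $N_G(u)$, and the rest of $H-x$ is a copy of $H-x$ in $G-u$. The main estimate is
\[
N_x\lesssim \Pr\bigl[\text{all } d_H(x) \text{ neighbours of } x \text{ land in } N(u)\bigr]\cdot \inj(H-x,G).
\]
A crude version, $N_x\le \inj(H-x,G)$, ignores the restriction entirely. The induction hypothesis (fewer edges) gives
\[
\inj(H-x,G)\le \inj(H-x,T_r(n))=p_r(H-x)n^{h-1}+O(n^{h-2}),
\]
after discarding isolated vertices. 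To capture the dependence on $\delta$, instead bound by first choosing the images of the neighbours of $x$. The fraction of admissible tuples shrinks by roughly $(\delta/n)^{d_H(x)}$ relative to an unconstrained choice. Combining this with $p_r(H)\approx p_r(H-x)(1-1/r)^{d_H(x)}$ (up to $O(J/r^2)$ corrections from intersecting edges) yields
\[
h\,p_r(H)\lesssim \sum_x p_r(H-x)\Bigl(\tfrac{\delta}{n}\Bigr)^{d_H(x)} .
\]

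\textbf{Step 3: linearize.} Write $\delta/n=1-\beta$. The ratio $p_r(H)/p_r(H-x)\ge(1-1/r)^{d_H(x)}-O(d_H(x)^2/r^2)$ follows from a union-bound / Bonferroni argument on the random colouring, and this is where $J$ enters. The inequality becomes
\[
\sum_x\bigl(1-d_H(x)\beta\bigr)\gtrsim \sum_x\Bigl(1-\tfrac{d_H(x)}{r}-O\bigl(\tfrac{d_H(x)^2}{r^2}\bigr)\Bigr),
\]
that is,
\[
2m\beta\le \frac{2m}{r}+O\Bigl(\frac{J}{r^2}\Bigr).
\]
Then $J\le 2m(\Delta-1)$ and $r\ge168m$ absorb the error into the constant. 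Converting to the stated form $\beta\le 31m/(10r\Delta)$ presumably uses the cruder weighting of the vertex $x$ of maximal degree, since $m/\Delta\ge1$ makes the claim weaker than $\beta\lesssim 1/r$.

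\textbf{Main obstacle.} The hard part is Step 2: rigorously bounding $N_x$ with the factor $(\delta/n)^{d_H(x)}$ while controlling the correlation between the constraint and the count of $H-x$. I would first try a clean conditional argument: fix the images of $N_H(x)$ in $N(u)$, then bound the extensions of $H-x$ via the induction hypothesis applied to $H-x-N_H(x)$. If that proves too lossy, I would fall back on the weaker single-neighbour constraint, which already gives a loss factor of order $\delta/n$ per vertex. This explains the $\Delta$ in the denominator with a generous constant $31/10$.
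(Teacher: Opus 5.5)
Your Step~1 is correct and is the paper's symmetrization step, but Step~2 has a genuine gap. The bound $N_x\lesssim(\delta/n)^{d_H(x)}\inj(H-x,G)$, and hence $hp_r(H)\lesssim\sum_x p_r(H-x)(\delta/n)^{d_H(x)}$, presumes that in the unknown extremal graph $G$ the constraint at $u$ is essentially uncorrelated with the embeddings of $H-x$. Nothing in your argument controls this correlation, and the product form does not hold on general grounds. Already for $G=T_r(n)$ and $H=C_4$ one has $N_x=\bigl(1+(r-1)^{-3}+o(1)\bigr)(\delta/n)^2\inj(H-x,G)$.

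Your proposed repair is to fix the images of $N_H(x)$ inside $N_G(u)$ and apply induction to $H-x-N_H(x)$. This is valid but too lossy. It replaces $p_r(H-x)$ by the larger $p_r(H-x-N_H(x))$, and summed over $x$ the excess can be of order $J/r$. At $\beta\approx 31m/(10r\Delta)$, the gain from the degree constraint is only about $2m\beta=O(m^2/(r\Delta))$. Concretely, for $H=K_{1,\Delta}$ the repair yields only $(\Delta+1)(1-\tfrac1r)^{\Delta}\le(1-\beta)^{\Delta}+\Delta(1-\beta)+o(1)$. For $\Delta\ge8$ and $r\ge168\Delta$ this still holds at $\beta=4/r>31/(10r)$, so no contradiction is reached. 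The single-neighbour fallback is not quantified and does not produce the factor $m/\Delta$ either.

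The missing idea is to combine the two individually valid bounds by a \emph{minimum} rather than a product. Write $d_G(u)=(1-\eta)n$. The first bound is $N_x\le(1-\eta)^{d_H(x)}n^{h-1}$: place the neighbours of $x$ in $N_G(u)$ and ignore every other edge. The second is $N_x\le p_r(H-x)n^{h-1}+O(n^{h-2})$, by induction on $H-x$. Together with Step~1 these give $hp_r(H)\le\sum_x\min\{p_r(H-x),(1-\eta)^{d_H(x)}\}+o(1)$, and no decorrelation is needed.

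The union bound gives $\sum_x p_r(H-x)\le hp_r(H)+2m/r$, so the total slack is only $2m/r$. It therefore suffices to beat it at a single vertex $u_*$ of degree $\Delta$. Using $p_r(H-u_*)\ge1-(m-\Delta)/r$, the assumption $\Delta\eta\ge31m/(10r)$ together with $r\ge168m$ gives $(1-\eta)^{\Delta}\le p_r(H-u_*)-(2+\tfrac1{23})\tfrac mr$. Hence the sum of minima is at most $hp_r(H)-\tfrac{m}{23r}$, a contradiction. This single-vertex comparison is exactly the source of the factor $m/\Delta$ in the lemma.
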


\begin{proof}
For $v\in V(G)$, let $I_G(v)$ be the number of injective homomorphisms $H\to G$ whose image contains $v$. For $u\in V(H)$, let $I_{G,u}(v)$ be the number of injective homomorphisms $\varphi:H\to G$ satisfying $\varphi(u)=v$. Since an injective map has a unique preimage of $v$,
\begin{equation}
\label{eq:rooted-decompose}
I_G(v)=\sum_{u\in V(H)}I_{G,u}(v).
\end{equation}

Choose $v_0\in V(G)$ maximizing $I_G(v_0)$. Double counting pairs consisting of an injective homomorphism and a vertex in its image gives
\[
I_G(v_0)\ge \frac{h}{n}\inj(H,G).
\]
By the extremality of $G$ and \eqref{eq:Turan-leading-term},
\[
I_G(v_0)\ge hp_r(H)n^{h-1}-O_{H,r}(n^{h-2}).
\]

Fix $v\in V(G)$. If $v\ne v_0$, let $G'$ be obtained by replacing $v$ with a twin of $v_0$: delete all edges incident with $v$, and then join $v$ to every vertex of $N_G(v_0)\setminus\{v\}$. In particular, $vv_0\notin E(G')$. The graph $G'$ remains $K_{r+1}$-free, because any new copy of $K_{r+1}$ would contain $v$ but not $v_0$, and replacing $v$ by $v_0$ would produce a copy of $K_{r+1}$ in $G$.

Every injective homomorphism $H\to G$ whose image contains $v_0$ but not $v$ yields an injective homomorphism $H\to G'$ after replacing $v_0$ by $v$. Only injective homomorphisms whose images contain $v$ can be lost when passing from $G$ to $G'$. Moreover, at most $h(h-1)(n-2)_{h-2}=O_H(n^{h-2})$ injective homomorphisms have images containing both $v$ and $v_0$. Hence
\[
\inj(H,G')-\inj(H,G)\ge I_G(v_0)-I_G(v)-O_H(n^{h-2}).
\]
The extremality of $G$ gives
\begin{equation}
\label{eq:all-rooted-lower}
I_G(v)\ge I_G(v_0)-O_H(n^{h-2})
\ge hp_r(H)n^{h-1}-O_{H,r}(n^{h-2}).
\end{equation}

Write $d_G(v)=(1-\eta)n$. Fix $u\in V(H)$. If an injective homomorphism maps $u$ to $v$, then the $d_H(u)$ neighbors of $u$ must be mapped to distinct vertices of $N_G(v)$. Ignoring all remaining edge constraints gives
\begin{equation}
\label{eq:rooted-degree-upper}
I_{G,u}(v)\le (d_G(v))_{d_H(u)}n^{h-1-d_H(u)}
\le (1-\eta)^{d_H(u)}n^{h-1}.
\end{equation}
On the other hand, restricting such an embedding to $H-u$ gives an injective homomorphism from $H-u$ to $G-v$. Since $e(H-u)=m-d_H(u)<m$, the induction hypothesis gives
\[
I_{G,u}(v)\le \inj(H-u,G-v)
\le \inj(H-u,T_r(n-1)).
\]
Using \eqref{eq:Turan-leading-term},
\begin{equation}
\label{eq:rooted-inductive-upper}
I_{G,u}(v)\le p_r(H-u)n^{h-1}+O_{H,r}(n^{h-2}).
\end{equation}
Combining \eqref{eq:rooted-degree-upper} and \eqref{eq:rooted-inductive-upper}, then summing over $u$ and using \eqref{eq:rooted-decompose} and \eqref{eq:all-rooted-lower}, yields
\begin{equation}
\label{eq:calibration-master}
hp_r(H)\le \sum_{u\in V(H)}\min\{p_r(H-u),(1-\eta)^{d_H(u)}\}+o(1).
\end{equation}

Restricting a uniformly random $r$-coloring of $H$ to $H-u$ gives a uniformly random $r$-coloring of $H-u$. Since every proper coloring of $H$ is proper on $H-u$, the difference $p_r(H-u)-p_r(H)$ is the probability that $H-u$ is properly colored and some edge incident with $u$ is monochromatic. Each such edge is monochromatic with probability $1/r$, so the union bound gives
\[
0\le p_r(H-u)-p_r(H)\le \frac{d_H(u)}{r}.
\]
Consequently,
\begin{equation}
\label{eq:sum-deletion-probabilities}
\sum_{u\in V(H)}p_r(H-u)
\le hp_r(H)+\frac1r\sum_{u\in V(H)}d_H(u)
=hp_r(H)+\frac{2m}{r}.
\end{equation}
Since $H-u$ has $m-d_H(u)$ edges, the union bound also gives
\begin{equation}
\label{eq:deletion-probability-lower}
p_r(H-u)\ge 1-\frac{m-d_H(u)}{r}.
\end{equation}

Let $u_*$ be a vertex of degree $\Delta$. Suppose for contradiction that $\Delta\eta\ge 31m/(10r)$. Since $1-\eta\le (1+\eta)^{-1}$ and $(1+\eta)^\Delta\ge 1+\Delta\eta$, this assumption and $r\ge 168m$ give
\begin{equation}
\label{eq:degree-loss-calibrated}
1-(1-\eta)^\Delta
\ge \frac{31m/(10r)}{1+31m/(10r)}
\ge \frac{5208m}{1711r}
>\left(3+\frac1{23}\right)\frac mr.
\end{equation}
Using \eqref{eq:deletion-probability-lower} for $u_*$, we obtain
\[
p_r(H-u_*)-(1-\eta)^\Delta\ge 1-\frac{m-\Delta}{r}-(1-\eta)^\Delta\ge \left(2+\frac1{23}\right)\frac mr.
\]
Therefore
\[
\min\{p_r(H-u_*),(1-\eta)^\Delta\}
\le p_r(H-u_*)-\left(2+\frac1{23}\right)\frac mr.
\]
For all other vertices, we use the trivial bound by $p_r(H-u)$. Together with \eqref{eq:sum-deletion-probabilities}, this gives
\[
\sum_{u\in V(H)}\min\{p_r(H-u),(1-\eta)^{d_H(u)}\}
\le hp_r(H)-\frac{m}{23r}.
\]
For sufficiently large $n$, this contradicts \eqref{eq:calibration-master}. Hence $\eta\le 31m/(10r\Delta)+o(1)$. Since $v\in V(G)$ was arbitrary, the claimed bound on $\delta(G)$ follows.
\end{proof}

Lemma \ref{lem:calibration} gives a local bound on the number of nonneighbors of each vertex. We next estimate the global proportion of nonedges.

For an $n$-vertex graph $Q$, define its \emph{ordered nonedge density} by $q(Q):=\tfrac{n(n-1)-2e(Q)}{n(n-1)}$.

\begin{lemma}
\label{lem:nonedge-density}
We have
\[
q(G)\le \frac1r\left(1+\frac{29m}{4r}\right)+o(1).
\]
\end{lemma}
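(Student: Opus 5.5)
The plan is to compare the extremal lower bound $\inj(H,G)\ge \inj(H,T_r(n))=p_r(H)n^h-O(n^{h-1})$ with an upper bound on $\inj(H,G)$ expressed through $q:=q(G)$. View $\inj(H,G)/(n)_h$ as the probability that a uniformly random injection $\sigma:V(H)\to V(G)$ is a homomorphism. For each edge $e=xy$ of $H$, let $B_e$ be the event that $\sigma(x)\sigma(y)\notin E(G)$. Then $\Prb(B_e)=q$ exactly.

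\textbf{Step 1 (Bonferroni upper bound).} The second Bonferroni inequality gives
\[
\frac{\inj(H,G)}{(n)_h}\le 1-mq+\sum_{\{e,f\}}\Prb(B_e\cap B_f).
\]
Split the pairs into two classes.
\begin{itemize}
\item Disjoint pairs $e,f$: the two image pairs are two uniformly random disjoint ordered pairs, so $\Prb(B_e\cap B_f)=q^2+o(1)$. This contributes at most $\binom m2 q^2+o(1)$.
\item Intersecting pairs, of which there are $J/2$: condition on the image of the shared vertex and of the edge $e$. The remaining endpoint of $f$ must then be a nonneighbour of a fixed vertex. By Lemma~\ref{lem:calibration} this has probability at most $\eta_*:=31m/(10r\Delta)+o(1)$. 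This contributes at most $\tfrac J2 q\eta_*\le m(\Delta-1)q\eta_*\le \tfrac{31m}{10r}\,q$, using $J\le 2m(\Delta-1)$.
\end{itemize}

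\textbf{Step 2 (lower bound).} The union bound gives $p_r(H)\ge 1-m/r$. Combining with Step~1 yields
\[
mq\Bigl(1-\frac{31m}{10r}\cdot\frac1m\cdot m\Big/m\Bigr)\ \lesssim\ \frac mr+\binom m2q^2 .
\]
Written cleanly, this is
\[
q\Bigl(1-\tfrac{31(\Delta-1)}{10r\Delta}\cdot m\cdot\tfrac1m\Bigr)\le \tfrac1r+\tfrac{m-1}2q^2+o(1).
\]
Here the intersecting term is of order $(m/r)q$, since $J\eta_*/(2m)\le 31m/(10r)$. Note that $31m/(10r)\le 31/1680$ is small.

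\textbf{Step 3 (solving the inequality).} Rearranged, this is a convex quadratic inequality in $q$. The bound $q\le (1/r)(1+29m/(4r))$ follows if $q$ lies below the smaller root. To establish that, I would first get an a priori bound on $q$. Lemma~\ref{lem:calibration} gives $q\le \eta_*$, since $q$ is the average nonneighbour fraction. Then check that the quadratic is negative on the whole interval between $(1/r)(1+29m/(4r))$ and the a priori bound. At the lower end, the correction $1+29m/(4r)$ must absorb the first-order intersecting loss, which is about $3.1m/r$, together with the disjoint term $\tfrac{m}{2}q^2\cdot r$. That term is $\approx m/(2r)$ when $q\approx 1/r$. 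The constant $29/4$ should come from adding these contributions with the slack allowed by $r\ge168m$.

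\textbf{Main obstacle.} The expected difficulty is the disjoint-pair term $\binom m2q^2$ at the upper end of the interval. When $\Delta$ is small, the a priori bound $q\le 31m/(10r\Delta)$ is as large as $\approx 3m/r$. There $\tfrac m2q^2$ is comparable to $q$ once $m^2/r$ is large, so a single Bonferroni step cannot exclude that range. My first attempt would be to rule out this range by a bootstrapping argument. Apply the same Bonferroni bound not to $H$ but to a subgraph $H'$ consisting of $s\approx r/(Cm)$-fraction-many edges, say a suitable subset of $\min\{m,\lfloor r/(10m)\rfloor\}$ edges. Then use the inequality $\inj(H,G)\le \inj(H',G)\,n^{h-v(H')}$ together with a corresponding lower-order comparison, obtaining first a crude bound $q\le 2/r$. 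After that, rerun the sharp computation of Step~3 on the short interval $[(1/r)(1+29m/(4r)),2/r]$, where $\tfrac m2 q^2\le 2m/r^2$ is genuinely a lower-order correction. If the subgraph comparison loses too much, the fallback is a third-order Bonferroni term for disjoint triples, whose $q^3$ contribution is controlled by the same a priori bound.
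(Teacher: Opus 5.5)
Steps 1 and 2 are sound, apart from a slip: the intersecting-pair contribution is at most $\frac{31m^2}{10r}q$, not $\frac{31m}{10r}q$; your version normalized by dividing by $m$ is the correct one. Step 3, however, has a genuine gap that your repair does not close. Write $c:=31m/(10r)$. The second Bonferroni bound $\Prb(Y>0)\ge mq(1-c)-\binom m2q^2-o(1)$ is not monotone in $q$, so combined with $\Prb(Y>0)\le m/r+o(1)$ it only excludes $q$ strictly between the two roots, the upper one being about $2/m$. As you note, the a priori bound $q\le 31m/(10r\Delta)$ exceeds this, for instance when $H$ is a matching with $m\ge 110$ edges and $r=168m$.

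Your bootstrap through $H'$ with $s$ edges does not give $q\le 2/r$. The graph $G$ is extremal for $H$, not for $H'$, so the only available input is $\Prb\bigl(\bigcup_{e\in E(H')}B_e\bigr)\le\Prb(Y>0)\le m/r+o(1)$; there is no bound of the form $s/r$. The truncated inequality is therefore $q\bigl(1-c-\tfrac{s-1}{2}q\bigr)\le \tfrac{m}{rs}+o(1)$, which for $s\approx r/(10m)$ gives only $q\lesssim 10m^2/r^2$. This is at most $2/r$ only if $r\gtrsim 5m^2$. It lies below the upper root $\approx 2/m$ of the full quadratic only if $r^2\gtrsim 5m^3$. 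Both fail for $r=168m$ and $m$ large. Your fallback points the wrong way: $S_1-S_2+S_3$ is an \emph{upper} bound on $\Prb\bigl(\bigcup_e B_e\bigr)$, and the next lower bound $S_1-S_2+S_3-S_4$ is even worse when $mq$ is large.

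The missing idea, which is what the paper uses, is the second-moment inequality $\Prb(Y>0)\ge(\E Y)^2/\E Y^2$ with $\E Y^2\le mq\bigl(1+mq+\tfrac{31m}{5r}+o(1)\bigr)$. Since $x\mapsto x/(1+x+a)$ is increasing, the disjoint-pair term sits in the denominator and costs only a factor $(1-m/r)^{-1}$. Combining with $\Prb(Y>0)\le m/r+o(1)$ gives $q(r-m)\le 1+\tfrac{31m}{5r}+o(1)$ directly, with no a priori bound on $q$.

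If you want to keep Bonferroni, choose the truncation adaptively rather than fixing $s$. Suppose $q\ge 1/m$, and apply the bound to $s=\lfloor(1-c)/q\rfloor\le m$ edges. Using $q\le c$ from Lemma~\ref{lem:calibration}, this gives $\Prb(Y>0)\ge\tfrac12\bigl((1-c)^2-q^2\bigr)-o(1)>m/r$, a contradiction. Hence $mq<1$, and the full bound gives $q\le\bigl(r(\tfrac12-c)\bigr)^{-1}+o(1)$. Your Step~3 then closes with $q\le\frac1r\bigl(1+4.3\frac mr\bigr)+o(1)$, which is comfortably within the $\frac{29}{4}$ of the statement.
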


\begin{proof}
Choose a uniformly random injection $\varphi:V(H)\to V(G)$. For each $e=xy\in E(H)$, let $X_e$ be the indicator of the event $\varphi(x)\varphi(y)\notin E(G)$, and define
\[
Y:=\sum_{e\in E(H)}X_e.
\]
Let $q:=q(G)$. Then $\E X_e=q$ and hence $\E Y=mq$.

Thus $Y$ counts the edges of $H$ mapped to nonedges of $G$. Our aim is to bound $q$ by comparing upper and lower bounds on $\Prb(Y>0)$. The lower bound will follow from the second-moment inequality $\Prb(Y>0)\ge (\E Y)^2/\E Y^2$, so we estimate $\E Y^2$. Since each $X_e$ is an indicator variable,
\[
Y^2=\sum_{e\in E(H)}X_e+
\sum_{\substack{e,f\in E(H)\\ e\ne f}}X_eX_f.
\]
Thus we must bound the probability that two distinct edges $e$ and $f$ of $H$ are both mapped to nonedges of $G$. They are either vertex-disjoint or share exactly one endpoint.

Suppose first that $e$ and $f$ are vertex-disjoint. Conditional on the ordered image of $e$, the edge $f$ is mapped uniformly to one of the $(n-2)(n-3)$ ordered pairs avoiding those two vertices, among which at most $qn(n-1)$ are nonedges. Therefore
\[
\Prb(X_e=X_f=1)
\le q\,\frac{qn(n-1)}{(n-2)(n-3)}
=q^2+O_H(1/n).
\]
Now suppose that $e$ and $f$ share a vertex $w$, and condition on $\varphi(w)=z$. Define
\[
\overline d(z):=n-1-d_G(z).
\]
By Lemma \ref{lem:calibration},
$\overline d(z)\le \bigl(31m/(10r\Delta)+o(1)\bigr)n$. Therefore
\[
\Prb(X_e=X_f=1\mid \varphi(w)=z)
=\frac{(\overline d(z))_2}{(n-1)_2}
\le \left(\frac{31m}{10r\Delta}+o(1)\right)\frac{\overline d(z)}{n-1}.
\]
Since $z$ is uniformly distributed over $V(G)$,
\[
\E_z\frac{\overline d(z)}{n-1}
=\frac1n\sum_{z\in V(G)}\frac{n-1-d_G(z)}{n-1}
=\frac{n(n-1)-2e(G)}{n(n-1)}
=q.
\]
Taking the expectation over $z=\varphi(w)$ now gives
\[
\Prb(X_e=X_f=1)\le \left(\frac{31m}{10r\Delta}+o(1)\right)q.
\]
There are at most $m^2$ ordered disjoint edge pairs and exactly $J$ ordered intersecting edge pairs. Hence
\[
\E Y^2\le mq+m^2q^2+\frac{31Jm}{10r\Delta}q+o(1)
=mq\left(1+mq+\frac{31J}{10r\Delta}+o(1)\right).
\]
Since $J/m\le2\Delta$, we have $31J/(10r\Delta)\le31m/(5r)$. The second-moment inequality therefore gives
\begin{equation}
\label{eq:Y-positive-lower}
\Prb(Y>0)\ge \frac{(\E Y)^2}{\E Y^2}
\ge \frac{mq}{1+mq+31m/(5r)+o(1)}.
\end{equation}

The event $Y=0$ is precisely the event that $\varphi$ is an injective homomorphism. A uniformly random injection into $T_r(n)$ fails to be an injective homomorphism only if at least one edge of $H$ has both endpoints in one part. The union bound gives
\[
\inj(H,T_r(n))\ge \left(1-\frac{m}{r}-o(1)\right)(n)_h.
\]
Since $G$ is extremal,
\begin{equation}
\label{eq:Y-positive-upper}
\Prb(Y>0)=1-\frac{\inj(H,G)}{(n)_h}\le \frac{m}{r}+o(1).
\end{equation}
Combining \eqref{eq:Y-positive-lower} and \eqref{eq:Y-positive-upper} gives
\[
q(r-m)\le 1+\frac{31m}{5r}+o(1).
\]
Since $r\ge 168m$, we have $m/r\le 1/168$. Hence
\[
q\le \frac1r\frac{1+31m/(5r)}{1-m/r}+o(1)
=\frac1r\left(1+\frac{36m/(5r)}{1-m/r}\right)+o(1)
\le \frac1r\left(1+\frac{29m}{4r}\right)+o(1).
\]
This proves the lemma.
\end{proof}

Among all spanning $r$-partite subgraphs of $G$, choose $G_0$ with the maximum number of edges. Let $\cP=(A_1,\ldots,A_r)$ be its vertex partition, and let $K_{\cP}:=K_{A_1,\ldots,A_r}$ be the complete $r$-partite graph with parts $A_1,\ldots,A_r$. Thus $G_0$ consists of all edges of $G$ whose endpoints lie in different parts of $\cP$. Define
\[
L:=|E(G)\setminus E(G_0)|,\qquad
M:=|E(K_{\cP})\setminus E(G_0)|,
\qquad
B:=e(T_r(n))-e(K_{\cP}).
\]
By Theorem \ref{thm:Furedi}, $L\le e(T_r(n))-e(G)=B+M-L$, and hence
\begin{equation}
\label{eq:Furedi-assembly}
2L\le B+M.
\end{equation}

The choice of $G_0$ means that $\cP$ maximizes the number of crossing edges of $G$. For $v\in V(G)$ and $A\subseteq V(G)$, let $e_G(v,A):=|N_G(v)\cap A|$. If $v\in A_i$, moving $v$ from $A_i$ to $A_j$ cannot increase this number. Thus $e_G(v,A_i)\le e_G(v,A_j)$ for every $j\ne i$, and hence $e_G(v,A_i)\le d_G(v)/r\le n/r$. Since $d_{G_0}(v)=d_G(v)-e_G(v,A_i)$, Lemma \ref{lem:calibration} yields
\begin{equation}
\label{eq:G0-min-degree}
\delta(G_0)\ge (1-\gamma-o(1))n,
\qquad
\gamma:=\frac1r+\frac{31m}{10r\Delta}.
\end{equation}
If $A_i\ne\varnothing$, choose $v\in A_i$. Since $d_{G_0}(v)\le n-|A_i|$,
\begin{equation}
\label{eq:part-size-upper}
\frac{|A_i|}{n}\le \gamma+o(1).
\end{equation}
Moreover, Theorem \ref{thm:Furedi} and Lemma \ref{lem:nonedge-density} imply
\begin{equation}
q(G_0)=q(G)+\frac{2L}{n(n-1)}\le 2q(G)-q(T_r(n))\le \frac1r\left(1+\frac{29m}{2r}\right)+o(1).
\label{eq:G0-nonedge-density}
\end{equation}

We next estimate the increase in $\inj(H,\cdot)$ obtained by adding one of the
$M$ edges in \mbox{$E(K_{\cP})\setminus E(G_0)$}.

\begin{lemma}
\label{lem:completion}
Let $G_0\subseteq Q\subseteq K_{\cP}$, and let $xy\in E(K_{\cP})\setminus E(Q)$. Then
\[
\inj(H,Q+xy)-\inj(H,Q)
\ge 2m\left(1-mq(Q)-\frac{J}{m}\gamma-o(1)\right)n^{h-2}.
\]
\end{lemma}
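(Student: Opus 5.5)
The plan is to count, from below, the injective homomorphisms $H\to Q+xy$ that use the new edge $xy$. Each such map sends some edge $ab\in E(H)$ onto $\{x,y\}$, and every such map is new, since $xy\notin E(Q)$. Fix an ordered pair $(a,b)$ with $ab\in E(H)$; there are $2m$ such pairs. Consider injections $\varphi$ with $\varphi(a)=x$, $\varphi(b)=y$, and the remaining $h-2$ vertices mapped injectively into $V\setminus\{x,y\}$, chosen uniformly at random. There are $(n-2)_{h-2}=(1-o(1))n^{h-2}$ of these. Distinct pairs $(a,b)$ give disjoint sets of maps, because $\varphi^{-1}(x)$ and $\varphi^{-1}(y)$ determine $(a,b)$. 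So it suffices to show, for each fixed $(a,b)$, that the probability that every edge $e\ne ab$ of $H$ lands on an edge of $Q$ is at least $1-mq(Q)-\frac Jm\gamma-o(1)$. We then sum over the $2m$ pairs. Here we do not need the sharper $J$ dependence per pair; it is enough that the total failure, averaged over pairs, is bounded as stated.

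By the union bound, the failure probability is at most $\sum_{e\ne ab}\Prb(\varphi(e)\notin E(Q))$, and we split into three types of edges.

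\emph{Edges disjoint from $\{a,b\}$.} These are mapped to a uniform ordered pair of distinct vertices outside $\{x,y\}$. The probability of a nonedge is therefore $q(Q)+O(1/n)$, and there are at most $m$ such edges, contributing at most $mq(Q)+o(1)$.

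\emph{Edges $e=aw$ with $w\ne b$.} These are mapped to $x\varphi(w)$ with $\varphi(w)$ uniform over $V\setminus\{x,y\}$. The probability of a nonedge is at most $\overline d_Q(x)/(n-2)$. Since $G_0\subseteq Q$, we have $\overline d_Q(x)\le n-1-\delta(G_0)\le(\gamma+o(1))n$ by \eqref{eq:G0-min-degree}. The same holds for edges at $b$. There are $(d_H(a)-1)+(d_H(b)-1)$ such edges, contributing at most $(d_H(a)+d_H(b)-2)(\gamma+o(1))$.

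Summing over ordered pairs, we get $\sum_{(a,b)}(d_H(a)+d_H(b)-2)=2\sum_{ab\in E(H)}(d_H(a)+d_H(b)-2)=2J$. So the total number of new maps is at least $\sum_{(a,b)}\bigl(1-mq(Q)-(d_H(a)+d_H(b)-2)\gamma-o(1)\bigr)n^{h-2}=\bigl(2m(1-mq(Q))-2J\gamma-o(m)\bigr)n^{h-2}$, which equals the claimed bound $2m\bigl(1-mq(Q)-\frac Jm\gamma-o(1)\bigr)n^{h-2}$. Note that the count of disjoint edges is $m-d_H(a)-d_H(b)+1\le m$, so the bound $mq(Q)$ is slightly generous but valid.

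The main thing to check is that no new maps are lost: adding an edge only creates homomorphisms, so $\inj(H,Q+xy)-\inj(H,Q)$ equals exactly the number of injective homomorphisms into $Q+xy$ that use $xy$. The only genuinely needed input is the minimum-degree bound \eqref{eq:G0-min-degree} transferring to $Q$ via $G_0\subseteq Q$; the rest is the first-moment union bound above.
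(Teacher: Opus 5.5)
Your proposal is correct and is essentially the paper's proof: you fix an orientation $(a,b)$ of an edge of $H$ mapped onto $(x,y)$, union-bound the failure of the remaining edges by $q(Q)+O(1/n)$ for edges disjoint from $ab$ and by $\gamma+o(1)$ for edges meeting $a$ or $b$ (using $G_0\subseteq Q$ and \eqref{eq:G0-min-degree}), and sum over the $2m$ orientations via $\sum_{(a,b)}(d_H(a)+d_H(b)-2)=2J$. The only blemish is that you announce three edge types but treat two, which is harmless since edges at $a$ and edges at $b$ are handled identically.
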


\begin{proof}
Let
\[
E^{\mathrm{ord}}(H):=\bigcup_{uv\in E(H)}\{(u,v),(v,u)\}
\]
be the set of ordered edge pairs of $H$. Thus $|E^{\mathrm{ord}}(H)|=2m$.
For $(u,v)\in E^{\mathrm{ord}}(H)$, let $\cE_{u,v}$ be the set of injective homomorphisms $\varphi:H\to Q+xy$ satisfying $\varphi(u)=x$ and $\varphi(v)=y$. Every embedding counted by $\inj(H,Q+xy)-\inj(H,Q)$ uses the new edge $xy$, and injectivity gives a unique ordered preimage $(u,v)$. Thus
\begin{equation}
\label{eq:completion-orientations}
\inj(H,Q+xy)-\inj(H,Q)=\sum_{(u,v)\in E^{\mathrm{ord}}(H)}|\cE_{u,v}|.
\end{equation}

Fix $(u,v)\in E^{\mathrm{ord}}(H)$, set $\varphi(u)=x$ and $\varphi(v)=y$, and choose the other $h-2$ images uniformly among the $(n-2)_{h-2}$ injections into $V(Q)\setminus\{x,y\}$. Such an injection lies in $\cE_{u,v}$ precisely when no remaining edge maps to a nonedge of $Q$. We treat the failure probabilities separately for edges disjoint from $uv$ and edges incident with $uv$, then apply the union bound.

First let $ab$ be vertex-disjoint from $uv$. Then $(\varphi(a),\varphi(b))$ is uniform among the $(n-2)(n-3)$ ordered pairs of distinct vertices in $V(Q)\setminus\{x,y\}$. Since $Q$ has $q(Q)n(n-1)$ ordered nonedges,
\[
\Prb(\varphi(a)\varphi(b)\notin E(Q))
\le q(Q)\frac{n(n-1)}{(n-2)(n-3)}
=q(Q)+O(1/n).
\]
Next consider an edge incident with $uv$, say $uw$ with $w\ne v$. Since $\varphi(w)$ is uniform on $V(Q)\setminus\{x,y\}$ and $xy\notin E(Q)$, exactly $n-2-d_Q(x)$ of its $n-2$ possible images are nonneighbors of $x$. Hence
\[
\Prb(x\varphi(w)\notin E(Q))
=\frac{n-2-d_Q(x)}{n-2}\le \gamma+o(1),
\]
where we used $G_0\subseteq Q$ and \eqref{eq:G0-min-degree}. The same bound applies to an edge $vw$ incident with $v$.

The two classes have respective sizes
\[
a_{u,v}:=m-d_H(u)-d_H(v)+1,
\qquad
b_{u,v}:=d_H(u)+d_H(v)-2.
\]
For each $ab\in E(H)\setminus\{uv\}$, let $\mathcal B_{ab}$ be the event that $\varphi(a)\varphi(b)\notin E(Q)$, and write $\mathcal B:=\bigcup_{ab\in E(H)\setminus\{uv\}}\mathcal B_{ab}$.
The union bound and the two estimates above give
\[
\Prb(\mathcal B)
\le a_{u,v}\bigl(q(Q)+O(1/n)\bigr)
   +b_{u,v}\bigl(\gamma+o(1)\bigr)
=a_{u,v}q(Q)+b_{u,v}\gamma+o(1).
\]
Since $\cE_{u,v}$ consists exactly of the injections for which $\mathcal B$ does not occur, $|\cE_{u,v}|=(n-2)_{h-2}(1-\Prb(\mathcal B))$. Hence, using $a_{u,v}\le m$,
\begin{align*}
|\cE_{u,v}|
&\ge (n-2)_{h-2}\left(1-a_{u,v}q(Q)-b_{u,v}\gamma-o(1)\right)\\
&\ge (n-2)_{h-2}\left(1-mq(Q)-b_{u,v}\gamma-o(1)\right).
\end{align*}
We now sum over the $2m$ ordered pairs in $E^{\mathrm{ord}}(H)$. By the definition of $b_{u,v}$,
\begin{align*}
\sum_{(u,v)\in E^{\mathrm{ord}}(H)}b_{u,v}
&=\sum_{(u,v)\in E^{\mathrm{ord}}(H)}
  \bigl((d_H(u)-1)+(d_H(v)-1)\bigr)\\
&=2\sum_{w\in V(H)}d_H(w)(d_H(w)-1)=2J.
\end{align*}
Hence \eqref{eq:completion-orientations} gives
\[
\inj(H,Q+xy)-\inj(H,Q)
\ge 2m(n-2)_{h-2}
\left(1-mq(Q)-\frac{J}{m}\gamma-o(1)\right).
\]
Finally, $(n-2)_{h-2}=n^{h-2}(1+O_H(1/n))$, which proves the lemma.
\end{proof}

Order the $M$ edges in $E(K_{\cP})\setminus E(G_0)$ arbitrarily and add them successively. Every intermediate graph $Q$ satisfies $q(Q)\le q(G_0)$. By \eqref{eq:G0-nonedge-density},
\[
mq(Q)\le \frac mr\left(1+\frac{29m}{2r}\right)+o(1).
\]
Moreover, $J/m\le m$ and $J/m\le2\Delta$ give
\begin{equation}
\label{eq:Jgamma}
\frac{J}{m}\gamma
=\frac{J}{mr}+\frac{31J}{10r\Delta}
\le \frac mr+\frac{31m}{5r}=\frac{36m}{5r}.
\end{equation}
Since $m/r\le1/168$, we have
$\frac mr(1+\frac{29m}{2r})+\frac{36m}{5r}\le\frac{83m}{10r}$.
Consequently,
\begin{equation}
\label{eq:completion-total}
\inj(H,K_{\cP})-\inj(H,G_0)
\ge 2m\left(1-\frac{83m}{10r}-o(1)\right)Mn^{h-2}.
\end{equation}

We next transform $K_{\cP}$ into $T_r(n)$. Whenever two parts have sizes $a\ge b+2$, move one vertex from the larger part to the smaller part, changing $(a,b)$ to $(a-1,b+1)$. Repeating this operation produces $T_r(n)$. The following lemma estimates the change in the number of embeddings associated with one balancing step.

\begin{lemma}
\label{lem:two-part-finite-difference}
Let $F$ be a graph with $t:=e(F)\ge 1$. Let $a,b$ be nonnegative integers with $a\ge b+2$, and define $\ell:=a+b$ and $d:=a-b-1$. Then
\[
\left|\inj(F,K_{a-1,b+1})-\inj(F,K_{a,b})\right|
\le 4t^2d\ell^{v(F)-2}.
\]
\end{lemma}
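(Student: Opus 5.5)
Write $k:=v(F)$ and count injective homomorphisms $F\to K_{A,B}$ through proper $2$-colorings, in the same way as \eqref{eq:Turan-leading-term} but exactly. Every injective homomorphism into a complete bipartite graph with sides $A,B$ induces a proper coloring $\varphi\in\Col_2(F)$. Conversely, if $s=|\varphi^{-1}(1)|$, then exactly $(|A|)_s(|B|)_{k-s}$ injective homomorphisms induce $\varphi$. Hence
\[
\inj(F,K_{a-1,b+1})-\inj(F,K_{a,b})=\sum_{\varphi\in\Col_2(F)}\Bigl[(a-1)_s(b+1)_{k-s}-(a)_s(b)_{k-s}\Bigr],
\]
where $s=s(\varphi)$. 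If $F$ is not bipartite, both counts vanish. Otherwise we must bound each bracket and the number of colorings. Since $t\ge1$, every coloring in $\Col_2(F)$ colors each non-isolated component in one of two ways, so $|\Col_2(F)|=2^{c}$ where $c$ counts components. This can be exponential in the number of isolated vertices, so we cannot simply multiply a per-coloring bound by the number of colorings.

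To handle this, first strip isolated vertices. Write $F=F_0\cup jK_1$, so that $\inj(F,K)=(\ell-v(F_0))_j\,\inj(F_0,K)$ for any $\ell$-vertex $K$. The factor is common to both graphs and is at most $\ell^{j}$, so it suffices to prove the bound for $F_0$ with exponent $v(F_0)-2$. Now $F_0$ has no isolated vertices, so it has at most $t$ components and $v(F_0)\le 2t$.

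Next, I plan to bound the bracket using $\ell$ fixed. Set $g(x):=(x)_s(\ell-x)_{k-s}$. The bracket is $g(a-1)-g(a)$, a single-step difference. Expanding each falling factorial as a product of linear factors and telescoping, changing one linear factor at a time, turns the difference into at most $k$ terms. Each term is bounded by $\ell^{k-1}$, which gives a bound of order $k\ell^{k-1}$. That is too weak, because it lacks the factor $d$ and carries an extra power of $\ell$.

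The needed cancellation comes from symmetry. Pair each coloring $\varphi$ with its swap $\bar\varphi$, obtained by flipping the colors on a component. Alternatively, note that the total $P(x):=\sum_\varphi (x)_{s}(\ell-x)_{k-s}$ is invariant under $x\mapsto \ell-x$ when the sum runs over all of $\Col_2(F_0)$, since swapping both colors globally is a bijection. Thus $P$ is a polynomial symmetric about $\ell/2$. The increment $a\to a-1$ moves $x$ toward the center, and $d=a-b-1=2(a-\ell/2)-1$ measures the distance from the center.

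By the mean value theorem, $P(a)-P(a-1)=P'(\xi)$ for some $\xi\in[a-1,a]$. By symmetry $P'(\ell/2)=0$, so $|P'(\xi)|\le |\xi-\ell/2|\max|P''|\le d\max|P''|$. It remains to show $|P''|\le 4t^2\ell^{k-2}$ on $[0,\ell]$. Writing $P(x)=\sum_\varphi\prod_i(x-\alpha_i)\prod_j(\ell-x-\beta_j)$ and differentiating twice gives, for each $\varphi$, at most $k(k-1)$ terms. Each term is a product of $k-2$ factors of absolute value at most $\ell$.

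This leaves $|P''|\le 2^{c}k^2\ell^{k-2}$, which is the main obstacle. To remove the $2^c$, I would apply the symmetry argument per component rather than globally. I would view $P$ as coming from the product structure of components: the $2^c$ colorings are independent choices on each of the $c$ components. Grouping the sum differently, the second derivative only involves derivatives hitting at most two components. Coloring flips on other components are then controlled by normalizing, that is, by dividing by $2^{c}$ and comparing with $\ell^{k}$-type counts.

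If this becomes messy, I would fall back to a cruder route. Direct counting of embeddings that use the moved vertex $w$ gives a difference of order $(\text{edges at the swap})\cdot d\,\ell^{k-2}$. Concretely, when one vertex $w$ moves from $A$ to $B$, embeddings not touching $w$ change only through the imbalance between the two sides. I would write these differences explicitly via edges, getting $2t$ choices of an edge for $w$'s images times $2t$ for the imbalance factor, which yields $4t^2$.
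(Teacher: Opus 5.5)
Your outer architecture matches the paper's: use the symmetry $x\mapsto \ell-x$, bound a second-order quantity, and integrate outward from the center to $a$. The paper does this discretely, telescoping second differences from the center. However, the proposal stops exactly at the step that carries the content of the lemma. That step is a bound on $P''$, or on a second difference, that does not pick up the factor $|\Col_2(F_0)|=2^c$, and neither remedy you suggest is an argument.

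The per-component idea fails as stated. $P(x)=\sum_{\varphi}(x)_s(\ell-x)_{k-s}$ has no product structure over components, because $(x)_{s_1+s_2}\ne(x)_{s_1}(x)_{s_2}$: injectivity couples the components. So there is nothing to ``normalize'' component by component. In addition, at non-integer $\xi$ the factors $\ell-\xi-\beta$ can be negative when $b$ is small, so even a termwise domination argument needs care.

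Your fallback paragraph is not correct as written either. Injective homomorphisms whose image avoids the moved vertex $w$ do not change at all, since both graphs induce the same $K_{A\setminus\{w\},B}$ on $V\setminus\{w\}$. You also never say where the factor $d$ comes from; ``$2t$ choices times $2t$'' is not a derivation.

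The idea you are missing, in the paper's proof, is to normalize and bound discrete second differences by a coupling. Set $p(x):=\inj(F,K_{x,\ell-x})/(\ell)_{v(F)}$. For a fixed injection $\phi:V(F^\circ)\to[\ell]$, this is the probability that a uniformly random $x$-subset $R$ splits every edge. Coupling $R$, $R\cup\{X\}$, $R\cup\{Y\}$ and $R\cup\{X,Y\}$ writes $p(j+1)-2p(j)+p(j-1)$ as the expectation of a quantity that has absolute value at most $2$ and vanishes unless $X,Y\in\phi(V(F^\circ))$. This gives $|p(j+1)-2p(j)+p(j-1)|\le 2v(F^\circ)(v(F^\circ)-1)/(\ell(\ell-1))\le 8t^2/\ell^2$, with no dependence on the number of colorings. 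Your symmetry step then yields $|p(a)-p(a-1)|\le 4t^2d/\ell^2$.

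Alternatively, your fallback can be made rigorous combinatorially. Take $w\in A$ and consider $K_{X\cup\{w\},Y}$ with $X\sqcup Y=V\setminus\{w\}$. The difference equals the change in the number of embeddings through $w$ when $(X,Y)$ passes from $(A\setminus\{w\},B)$, which gives $K_{a,b}$, to $(B,A\setminus\{w\})$, which gives $K_{a-1,b+1}$. This count depends only on $|X|$. Move the $d=(a-1)-b$ vertices from $X$ to $Y$ one at a time. Each move affects only embeddings through both $w$ and the moved vertex, and there are at most $k(k-1)\ell^{k-2}$ of these in either graph. This gives $dk(k-1)\ell^{k-2}\le 4t^2d\ell^{k-2}$ for $F_0$, since $k=v(F_0)\le 2t$. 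The argument covers $b=0$ as well, and the isolated-vertex factor then finishes the proof for $F$.
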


\begin{proof}
If $b=0$, then $K_{a,0}$ is edgeless. Since $t\ge1$, every embedding of $F$ into $K_{a-1,1}$ sends an endpoint of some edge of $F$ to the singleton part. Choosing an oriented edge with this endpoint first gives
\[
\inj(F,K_{a-1,1})
\le 2t(a-1)_{v(F)-1}
\le 2t a^{v(F)-1}
\le 4t^2d\ell^{v(F)-2}.
\]
Here the last inequality follows from $\ell=a$, $d=a-1\ge a/2$, and $t\ge1$.

Assume now that $b\ge1$. If $\ell<v(F)$ or $F$ is not bipartite, the left-hand side is zero. We may therefore assume that $\ell\ge v(F)$ and that $F$ is bipartite.

Let $F^\circ$ be obtained from $F$ by deleting its isolated vertices. Since every vertex of $F^\circ$ is incident with an edge, $v(F^\circ)\le2t$. For $0\le x\le\ell$, define
\[
p(x):=\frac{\inj(F,K_{x,\ell-x})}{(\ell)_{v(F)}}
=\frac{\inj(F^\circ,K_{x,\ell-x})}{(\ell)_{v(F^\circ)}}.
\]
Indeed, every injective homomorphism of $F^\circ$ extends to $F$ in $(\ell-v(F^\circ))_{v(F)-v(F^\circ)}$ ways, and
$(\ell)_{v(F)}=(\ell)_{v(F^\circ)}(\ell-v(F^\circ))_{v(F)-v(F^\circ)}$.

Identify the vertex set of $K_{x,\ell-x}$ with $[\ell]:=\{1,\ldots,\ell\}$, and let $R_0$ be its part of size $x$. Then $p(x)$ is the probability that a uniformly random injection $\psi:V(F^\circ)\to[\ell]$ maps the endpoints of every edge of $F^\circ$ to different parts. Equivalently, fix an injection $\phi:V(F^\circ)\to[\ell]$ and choose a uniformly random $x$-subset $R\subseteq[\ell]$. To see this, let $\pi$ be a uniformly random permutation of $[\ell]$. Then $\psi=\pi\circ\phi$ is a uniformly random injection and $R=\pi^{-1}(R_0)$ is a uniformly random $x$-subset. Moreover, $\psi(u)\in R_0$ if and only if $\phi(u)\in R$. Hence
\[
p(x)=\Prb_R\bigl(
|\{\phi(u),\phi(v)\}\cap R|=1
\text{ for every }uv\in E(F^\circ)
\bigr).
\]

For $R\subseteq[\ell]$, define
\[
f(R):=
\begin{cases}
1, & \text{if }|\{\phi(u),\phi(v)\}\cap R|=1
     \text{ for every }uv\in E(F^\circ),\\
0, & \text{otherwise}.
\end{cases}
\]
Fix $1\le j\le\ell-1$. Choose a uniformly random $(j-1)$-subset $R$ of $[\ell]$, followed by an ordered pair of distinct elements $X,Y$ chosen uniformly from $[\ell]\setminus R$. The four sets $R\cup\{X,Y\}$, $R\cup\{X\}$, $R\cup\{Y\}$, and $R$ are uniform among the subsets of sizes $j+1,j,j,j-1$, respectively. Therefore
\[
p(j+1)-2p(j)+p(j-1)
=\E\bigl[f(R\cup\{X,Y\})-f(R\cup\{X\})-f(R\cup\{Y\})+f(R)\bigr].
\]
If $X\notin\phi(V(F^\circ))$, then $f(R\cup\{X,Y\})=f(R\cup\{Y\})$ and $f(R\cup\{X\})=f(R)$; the analogous equalities hold when $Y\notin\phi(V(F^\circ))$. Thus the expression inside the expectation vanishes unless $X,Y\in\phi(V(F^\circ))$, and its absolute value is at most $2$.

Since $(X,Y)$ is uniform among the $\ell(\ell-1)$ ordered pairs of distinct elements of $[\ell]$, we obtain
\begin{equation}
\label{eq:second-difference-p}
|p(j+1)-2p(j)+p(j-1)|
\le \frac{2v(F^\circ)(v(F^\circ)-1)}{\ell(\ell-1)}
\le 8\frac{t^2}{\ell^2}.
\end{equation}
Here the last inequality uses $v(F^\circ)\le2t$ and $\ell(\ell-1)\ge\ell^2/2$.

Define
\[
D_j:=p(j)-p(j-1),
\qquad
\alpha:=8\frac{t^2}{\ell^2}.
\]
Then \eqref{eq:second-difference-p} gives $|D_{j+1}-D_j|\le\alpha$. Interchanging the two parts gives $p(x)=p(\ell-x)$. Moreover, $\ell=a+b$ and $a\ge b+2$ imply $a\ge\ell/2+1$.

Suppose first that $\ell=2c+1$. Then $p(c+1)=p(c)$, so $D_{c+1}=0$. Since $d=2(a-c-1)$, telescoping from $D_{c+1}$ to $D_a$ gives
\[
|D_a|
\le\sum_{j=c+1}^{a-1}|D_{j+1}-D_j|
\le(a-c-1)\alpha
=\frac d2\alpha.
\]

Now suppose that $\ell=2c$. In this case $D_{c+1}=-D_c$, and hence $2|D_{c+1}|=|D_{c+1}-D_c|\le\alpha$. Since $d=2(a-c)-1$, we obtain
\[
|D_a|
\le |D_{c+1}|+\sum_{j=c+1}^{a-1}|D_{j+1}-D_j|
\le\frac{\alpha}{2}+(a-c-1)\alpha
=\frac d2\alpha.
\]
Thus, in either case,
\[
|p(a)-p(a-1)|\le \frac{d}{2}\alpha.
\]
Finally,
\[
\left|\inj(F,K_{a-1,b+1})-\inj(F,K_{a,b})\right|
=(\ell)_{v(F)}|p(a-1)-p(a)|.
\]
Since $(\ell)_{v(F)}\le\ell^{v(F)}$ and $(d/2)\alpha=4t^2d/\ell^2$, the desired bound follows.
\end{proof}

Lemma \ref{lem:two-part-finite-difference} gives the following estimate for a balancing step in a complete multipartite graph.

\begin{lemma}
\label{lem:balancing}
Let $K=K_{a_1,\ldots,a_r}$ have parts $A_1,\ldots,A_r$, where $a_k:=|A_k|$. Choose $i\ne j$ with $a_i\ge a_j+2$, and let $K'$ be obtained by moving one vertex from $A_i$ to $A_j$. Define
\[
d:=a_i-a_j-1=e(K')-e(K),\qquad
\ell:=a_i+a_j,\qquad
s:=\frac{\ell}{n},\qquad
\sigma_2:=\sum_{k=1}^r\left(\frac{a_k}{n}\right)^2.
\]
Then
\[
\inj(H,K')-\inj(H,K)
\ge 2mdn^{h-2}\left(1-5\frac{J}{m}s-5ms^2-m\sigma_2-O_H(1/n)\right).
\]
\end{lemma}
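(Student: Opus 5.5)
The plan is to split $\inj(H,\cdot)$ according to which vertices of $H$ are sent into $A_i\cup A_j$, since only that part of the structure changes. For $S\subseteq V(H)$, every injective homomorphism of $H$ into $K$ (or $K'$) that sends exactly $S$ into $A_i\cup A_j$ factors as two independent maps. The first is an injective homomorphism of $H[S]$ into the bipartite graph $K_{a_i,a_j}$ (respectively $K_{a_i-1,a_j+1}$). The second is an injective homomorphism $\psi$ of $H-S$ into the complete $(r-2)$-partite graph on the remaining parts. Edges between $S$ and $V(H)\setminus S$ are automatically crossing, so they impose no constraint. Write $N_S$ for the number of such $\psi$; it is the same for $K$ and $K'$. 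Then
\[
\inj(H,K')-\inj(H,K)=\sum_{S}N_S\,\Delta_S,\qquad
\Delta_S:=\inj(H[S],K_{a_i-1,a_j+1})-\inj(H[S],K_{a_i,a_j}).
\]
If $e(H[S])=0$, then $\Delta_S=0$.

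\textbf{Main term.} First take the sets $S$ with $e(H[S])=1$, say $E(H[S])=\{uv\}$. The isolated vertices of $H[S]$ embed freely into the remaining $\ell-2$ vertices, and $\inj(K_2,K_{a,b})=2ab$. Hence $\Delta_S=2d\,(\ell-2)_{|S|-2}$ exactly, and these terms are nonnegative. For each edge $uv$, the sum over such $S$ equals $2d$ times the number of pairs consisting of injective images of $u,v$ in $A_i\cup A_j$ (normalized) and an injection of $V(H)\setminus\{u,v\}$ avoiding them, subject to three conditions. Every edge of $H$ other than $uv$ must be crossing in $K$; no neighbor of $u$ or $v$ may lie in $A_i\cup A_j$; and no other edge may have both ends in $A_i\cup A_j$. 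View this as a uniformly random injection of $V(H)\setminus\{u,v\}$ and apply the union bound. Each of the $b_{u,v}=d_H(u)+d_H(v)-2$ incident edges fails with probability $s+O(1/n)$. Each remaining edge lies inside a single part with probability $\sigma_2+O(1/n)$, or inside $A_i\cup A_j$ with probability at most $s^2+O(1/n)$. Summing over the $m$ edges, with $\sum_{uv\in E(H)}b_{u,v}=J$ (half of the ordered sum in Lemma~\ref{lem:completion}), gives a contribution of at least
\[
2mdn^{h-2}\Bigl(1-\tfrac{J}{m}s-ms^2-m\sigma_2-O_H(1/n)\Bigr).
\]

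\textbf{Error terms.} For $S$ with $t_S:=e(H[S])\ge2$, apply Lemma~\ref{lem:two-part-finite-difference} with $F=H[S]$ to get $|\Delta_S|\le 4t_S^2d\,\ell^{|S|-2}$. Bound $N_S$ by the number of injections of $V(H)\setminus S$ into $V\setminus(A_i\cup A_j)$, which is at most $((1-s)n)^{h-|S|}$. This gives
\[
\sum_{S:\,t_S\ge2}N_S|\Delta_S|\le \frac{4dn^{h-2}}{s^2}\,\E\bigl[t_{\mathbf S}^2\mathbf 1_{t_{\mathbf S}\ge2}\bigr],
\]
where $\mathbf S$ contains each vertex of $H$ independently with probability $s$. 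On $\{t\ge2\}$ we have $t^2\le2t(t-1)$. The quantity $\E[t(t-1)]$ counts ordered pairs of distinct edges of $H$ both inside $\mathbf S$: the $J$ intersecting pairs each contribute $s^3$, and at most $m^2$ disjoint pairs each contribute $s^4$. Hence the error is at most $8dn^{h-2}(Js+m^2s^2)=2mdn^{h-2}\bigl(4\tfrac{J}{m}s+4ms^2\bigr)$. Combined with the main term, this gives the claimed coefficients $5$.

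\textbf{Main obstacle.} The delicate step is the error term. A naive count over $S$ with weights $s^{|S|-2}$, without the $(1-s)^{h-|S|}$ factors, would blow up by a factor like $(1+s)^h$. The argument therefore relies on the probabilistic repackaging, in which $N_S$ and $\ell^{|S|}$ combine into independent-inclusion probabilities, together with the inequality $t^2\le 2t(t-1)$ so that only pairs of edges, not single edges, enter. One also has to check that the $O(1/n)$ discrepancies between falling factorials and powers are uniform and absorbed into $O_H(1/n)$, which is routine.
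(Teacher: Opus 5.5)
Your proposal is correct and follows the paper's proof essentially step for step. It uses the same decomposition by the preimage of $A_i\cup A_j$, the same exact one-edge computation $\Delta_S=2d(\ell-2)_{|S|-2}$ with the three-event union bound, and the same treatment of the $e(H[S])\ge2$ terms via Lemma~\ref{lem:two-part-finite-difference}, $t^2\le 2t(t-1)$, and summation over ordered edge pairs. Your $s$-biased random subset $\mathbf S$ is just a probabilistic rephrasing of the paper's identity $\sum_{U_{e,f}\subseteq W\subseteq V(H)}s^{|W|-2}(1-s)^{h-|W|}=s^{|U_{e,f}|-2}$.
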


\begin{proof}
For $W\subseteq V(H)$, let $R(W)$ be the number of injective homomorphisms from $H-W$ to the complete $(r-2)$-partite graph induced by the parts other than $A_i,A_j$. Splitting each embedding according to $W=\varphi^{-1}(A_i\cup A_j)$ gives
\begin{align*}
\inj(H,K)
&=\sum_{W\subseteq V(H)}\inj(H[W],K_{a_i,a_j})R(W),\\
\inj(H,K')
&=\sum_{W\subseteq V(H)}\inj(H[W],K_{a_i-1,a_j+1})R(W).
\end{align*}
For each $W\subseteq V(H)$, define
\[
\Delta_W:=\inj(H[W],K_{a_i-1,a_j+1})-\inj(H[W],K_{a_i,a_j}).
\]
It follows that
\[
\inj(H,K')-\inj(H,K)=\sum_{W\subseteq V(H)}\Delta_WR(W).
\]
If $e(H[W])=0$, both embedding counts in the definition of $\Delta_W$ equal $(\ell)_{|W|}$, so $\Delta_W=0$. Define
\[
\Delta^{(1)}
:=\sum_{\substack{W\subseteq V(H)\\e(H[W])=1}}\Delta_WR(W),
\qquad
\Delta^{(\ge2)}
:=\sum_{\substack{W\subseteq V(H)\\e(H[W])\ge2}}\Delta_WR(W).
\]
Then
\[
\inj(H,K')-\inj(H,K)=\Delta^{(1)}+\Delta^{(\ge2)}.
\]

We first estimate $\Delta^{(1)}$. If $e(H[W])=1$, then $H[W]$ consists of one edge and isolated vertices. In this case
\[
\inj(H[W],K_{a_i,a_j})
=2a_ia_j(\ell-2)_{|W|-2}.
\]
Since $2\bigl((a_i-1)(a_j+1)-a_ia_j\bigr)=2d$, we have
\begin{equation}
\label{eq:one-edge-delta}
\Delta_W=2d(\ell-2)_{|W|-2}.
\end{equation}

Fix $uv\in E(H)$. To sum \eqref{eq:one-edge-delta} over the sets $W$ satisfying $E(H[W])=\{uv\}$, regard the images of $u$ and $v$ as fixed in the two selected parts and map the remaining $h-2$ vertices by a uniformly random injection into the remaining $n-2$ host vertices. Let $S$ be the set of the remaining $\ell-2$ vertices in $A_i\cup A_j$. Such an injection contributes to the desired sum provided none of the following events occurs.
\begin{enumerate}
\renewcommand{\labelenumi}{(\roman{enumi})}
\item Some vertex in $(N_H(u)\cup N_H(v))\setminus\{u,v\}$ is mapped into $S$. There are at most $d_H(u)+d_H(v)-2$ such vertices, and each is mapped into $S$ with probability $(\ell-2)/(n-2)=s+O(1/n)$. Hence this event has probability at most $\bigl(d_H(u)+d_H(v)-2\bigr)(s+O(1/n))$.
\item For some edge $ab\in E(H)$ vertex-disjoint from $uv$, both $a$ and $b$ are mapped into $S$. Then $a,b\in W$, so $E(H[W])\ne\{uv\}$. For each of at most $m$ such edges, the probability is $(\ell-2)_2/(n-2)_2=s^2+O(1/n)$, so the total probability is at most $m(s^2+O(1/n))$.
\item For some $k\notin\{i,j\}$, both endpoints of an edge of $H$ are mapped into $A_k$. For each edge, the probability is at most $\sum_{k\notin\{i,j\}}(a_k)_2/(n-2)_2\le \sigma_2+O(1/n)$. Summing over the $m$ edges gives $m(\sigma_2+O(1/n))$.
\end{enumerate}
By the union bound, the probability that at least one of these events occurs is at most
\[
\bigl(d_H(u)+d_H(v)-2\bigr)s+ms^2+m\sigma_2+O_H(1/n).
\]
Hence the terms corresponding to the sets $W$ satisfying $E(H[W])=\{uv\}$ sum to at least
\[
2d(n-2)_{h-2}
\left(1-\bigl(d_H(u)+d_H(v)-2\bigr)s-ms^2-m\sigma_2-O_H(1/n)\right).
\]
Summing over $uv\in E(H)$ and using
\[
\sum_{uv\in E(H)}\bigl(d_H(u)+d_H(v)-2\bigr)=J,
\qquad
(n-2)_{h-2}=n^{h-2}(1+O_H(1/n)),
\]
we obtain
\begin{equation}
\label{eq:balancing-one-edge}
\Delta^{(1)}
\ge 2mdn^{h-2}\left(1-\frac{J}{m}s-ms^2-m\sigma_2-O_H(1/n)\right).
\end{equation}

For $e(H[W])\ge2$, Lemma \ref{lem:two-part-finite-difference} gives $|\Delta_W|\le 4e(H[W])^2d\ell^{|W|-2}$. Therefore,
\[
\Delta^{(\ge2)}
\ge -4d\sum_{\substack{W\subseteq V(H)\\e(H[W])\ge2}}
e(H[W])^2\ell^{|W|-2}R(W).
\]
Since $R(W)\le(n-\ell)_{h-|W|}\le(n-\ell)^{h-|W|}$ and $s=\ell/n$,
it follows that
\[
\Delta^{(\ge2)}
\ge -4dn^{h-2}
\sum_{\substack{W\subseteq V(H)\\e(H[W])\ge2}}
e(H[W])^2s^{|W|-2}(1-s)^{h-|W|}.
\]
When $e(H[W])\ge2$, $e(H[W])^2\le2e(H[W])(e(H[W])-1)$, where $e(H[W])(e(H[W])-1)$ counts the ordered pairs of distinct edges in $H[W]$. For an ordered pair $(e,f)$ of distinct edges of $H$, let $U_{e,f}:=V(e)\cup V(f)$. Hence
\[
4\sum_{\substack{W\subseteq V(H)\\e(H[W])\ge2}}e(H[W])^2s^{|W|-2}(1-s)^{h-|W|}
\le 8\sum_{\substack{(e,f)\in E(H)^2\\e\ne f}}\ \sum_{U_{e,f}\subseteq W\subseteq V(H)}s^{|W|-2}(1-s)^{h-|W|}.
\]
For each fixed pair $(e,f)$, every set $W$ satisfying $U_{e,f}\subseteq W\subseteq V(H)$ is uniquely of the form $W=U_{e,f}\cup Z$, where $Z\subseteq V(H)\setminus U_{e,f}$. Hence
\[
\sum_{U_{e,f}\subseteq W\subseteq V(H)}s^{|W|-2}(1-s)^{h-|W|}
=s^{|U_{e,f}|-2}\sum_{Z\subseteq V(H)\setminus U_{e,f}}s^{|Z|}(1-s)^{h-|U_{e,f}|-|Z|}
=s^{|U_{e,f}|-2}.
\]
If $e$ and $f$ share one endpoint, then $|U_{e,f}|=3$; otherwise $|U_{e,f}|=4$. There are $J$ ordered pairs of the first type and at most $m^2$ of the second. Therefore,
\begin{equation}
\label{eq:balancing-multiple-edge}
\Delta^{(\ge2)}\ge -8dn^{h-2}(Js+m^2s^2).
\end{equation}
Combining \eqref{eq:balancing-one-edge} and \eqref{eq:balancing-multiple-edge} gives
\[
\inj(H,K')-\inj(H,K)
\ge2mdn^{h-2}
\left(1-5\frac{J}{m}s-5ms^2-m\sigma_2-O_H(1/n)\right),
\]
as required.
\end{proof}

Balancing $K_{\cP}$ into $T_r(n)$ increases the number of edges by $B=e(T_r(n))-e(K_{\cP})$. Throughout this process, neither the largest part size nor $\sigma_2$ increases. Hence, by \eqref{eq:part-size-upper}, every balancing step satisfies $s\le 2\gamma+o(1)$. Together with \eqref{eq:Jgamma}, this gives
\[
\frac{J}{m}s\le 2\frac{J}{m}\gamma+o(1)
\le \frac{72}{5}\frac{m}{r}+o(1).
\]
Every ordered pair of distinct vertices within the same initial part is a nonedge of $G_0$. Since $\sigma_2$ never increases, \eqref{eq:G0-nonedge-density} gives, at every balancing step,
\[
\sigma_2
\le \frac{\sum_{i=1}^r(|A_i|)_2}{n(n-1)}+O(1/n)
\le q(G_0)+O(1/n)
\le \frac1r\left(1+\frac{29m}{2r}\right)+o(1).
\]
For the two parts involved in the step, $s^2\le2\sigma_2$, and hence
\[
ms^2\le 2m\sigma_2
\le \frac{2m}{r}\left(1+\frac{29m}{2r}\right)+o(1).
\]
Consequently, the total relative loss in Lemma \ref{lem:balancing} is at most
\[
72\frac{m}{r}+11\frac{m}{r}\left(1+\frac{29m}{2r}\right)+o(1)
=83\frac{m}{r}+\frac{319m^2}{2r^2}+o(1)
\le \frac{1679m}{20r}+o(1),
\]
where the last inequality follows directly from $r\ge168m$. Substituting these estimates into Lemma \ref{lem:balancing} and summing over all balancing steps gives
\begin{equation}
\label{eq:balancing-total}
\inj(H,T_r(n))-\inj(H,K_{\cP})
\ge 2m\left(1-\frac{1679m}{20r}-o(1)\right)Bn^{h-2}.
\end{equation}

Deleting one edge of the host graph destroys at most $2m(n-2)_{h-2}\le 2mn^{h-2}$ injective homomorphisms. Since $|E(G)\setminus E(G_0)|=L$,
\begin{equation}
\label{eq:deletion-loss}
\inj(H,G_0)\ge \inj(H,G)-2mLn^{h-2}.
\end{equation}
Combining \eqref{eq:completion-total} and \eqref{eq:balancing-total}, and using $83/10<1679/20$, gives
\[
\inj(H,T_r(n))-\inj(H,G_0)
\ge 2m\left(1-\frac{1679m}{20r}-o(1)\right)(B+M)n^{h-2}.
\]
Together with \eqref{eq:deletion-loss} and \eqref{eq:Furedi-assembly}, this yields
\begin{align*}
\inj(H,T_r(n))-\inj(H,G)
&\ge 2m\left(\left(1-\frac{1679m}{20r}-o(1)\right)(B+M)-L\right)n^{h-2}\\
&\ge 2m\left(\frac12-\frac{1679m}{20r}-o(1)\right)(B+M)n^{h-2},
\end{align*}
where the second inequality uses $2L\le B+M$. Since $r\ge168m$, the constant part of the coefficient is at least $1/3360$. Hence, for all sufficiently large $n$, the preceding inequality and the extremality of $G$ imply $B+M=0$. Since $B,M\ge0$, we have $B=M=0$, and \eqref{eq:Furedi-assembly} gives $L=0$. Consequently, $G\cong T_r(n)$.\hfill\qedsymbol

\subsection{The stability conclusion}

It suffices to prove the following sequential formulation: if $(G_n)$ is a sequence of $n$-vertex $K_{r+1}$-free graphs satisfying
\begin{equation}
\label{eq:near-extremal-sequence}
\inj(H,G_n)\ge \inj(H,T_r(n))-o(n^h),
\end{equation}
then the edit distance from $G_n$ to $T_r(n)$ is $o(n^2)$.

The equivalence with the definition of Tur\'an-stability will be verified at the end. The next lemma combines the symmetrization argument of Gerbner and Hama Karim \cite{GerbnerHamaKarim} with the degree estimate from the proof of Lemma \ref{lem:calibration}.

\begin{lemma}
\label{lem:cleaning}
There exists a sequence $(\widehat G_n)$ of $n$-vertex $K_{r+1}$-free graphs such that
\[
\inj(H,\widehat G_n)\ge \inj(H,G_n),\qquad
\delta(\widehat G_n)\ge \left(1-\frac{31m}{10r\Delta}\right)n,
\]
and the edit distance between $G_n$ and $\widehat G_n$ is $o(n^2)$.
\end{lemma}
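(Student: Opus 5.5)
We will run a Zykov-type symmetrization (``cloning'') procedure on $G_n$, in the spirit of Gerbner and Hama Karim, and control it with the rooted estimates from the proof of Lemma \ref{lem:calibration}.

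\textbf{The procedure.} Set $\eta_0:=31m/(10r\Delta)$ and fix a small $\xi=\xi(H,r)>0$ to be chosen later. Start with $G^{(0)}:=G_n$. At step $k$, call a vertex $v$ of $G^{(k)}$ \emph{bad} if $d_{G^{(k)}}(v)<(1-\eta_0)n$. If no bad vertex exists, stop and output $\widehat G_n:=G^{(k)}$. Otherwise choose $v_0$ maximizing $I_{G^{(k)}}(v_0)$, pick a bad vertex $v$, and replace $v$ by a twin of $v_0$, exactly as in the proof of Lemma \ref{lem:calibration}. That proof already shows that this operation preserves $K_{r+1}$-freeness and changes the count by
\[
\inj(H,G^{(k+1)})-\inj(H,G^{(k)})\ge I(v_0)-I(v)-O_H(n^{h-2}).
\]

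\textbf{Step 1: each step gains a definite amount.} Rerun inequality \eqref{eq:calibration-master}, without the $o(1)$ terms. Since $v$ is bad, its degree is $(1-\eta)n$ with $\Delta\eta>31m/(10r)$. Using \eqref{eq:rooted-degree-upper}, \eqref{eq:rooted-inductive-upper} (the induction hypothesis, i.e.\ the Tur\'an-good result for $H-u$ from the previous subsection), and \eqref{eq:sum-deletion-probabilities}, we get
\[
I(v)\le \bigl(hp_r(H)-m/(23r)\bigr)n^{h-1}+O(n^{h-2}).
\]
For $v_0$, averaging gives $I(v_0)\ge h\,\inj(H,G^{(k)})/n$.

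The point is that, as long as $\inj(H,G^{(k)})\ge \inj(H,T_r(n))-\xi n^h$, we have
\[
I(v_0)\ge \bigl(hp_r(H)-h\xi\bigr)n^{h-1}-O(n^{h-2}).
\]
Choosing $\xi<m/(46rh)$, each replacement then increases $\inj(H,\cdot)$ by at least $c\,n^{h-1}$ with $c:=m/(46r)$. The count never decreases, and $G_n$ satisfies \eqref{eq:near-extremal-sequence}, so the hypothesis $\inj(H,G^{(k)})\ge \inj(H,T_r(n))-\xi n^h$ holds for $n$ large and persists at every step.

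\textbf{Step 2: the procedure stops quickly.} The counts are bounded above by $\ex(n,H,K_{r+1})=\inj(H,T_r(n))$. Since the initial deficit is $o(n^h)$ and each step gains at least $c\,n^{h-1}$, the number of steps is $o(n)$. Each step changes at most $2n$ edges, so the total edit distance is $o(n^2)$. When the procedure stops, there is no bad vertex, so $\delta(\widehat G_n)\ge(1-\eta_0)n$ exactly, with no $o(1)$ term. Also $\inj(H,\widehat G_n)\ge\inj(H,G_n)$ because every step increases the count.

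\textbf{Main obstacle.} The delicate point is the uniformity of the error terms. The $O_{H,r}(n^{h-2})$ errors from \eqref{eq:rooted-inductive-upper}, from the overlap of images containing both $v$ and $v_0$, and from \eqref{eq:Turan-leading-term} must be uniform in the current graph. They are, since they depend only on $H$, $r$ and $n$. We also need $v\ne v_0$: a bad vertex cannot maximize $I$, because its value is below the average once $n$ is large. Finally, if a vertex becomes bad again after later steps, it is handled by further steps, and the same counting bound on the number of steps still applies, so this does not affect Step 2.
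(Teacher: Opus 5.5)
Your proposal is correct and follows essentially the same route as the paper. You repeatedly replace a low-degree vertex by a twin of a vertex maximizing $I_Q$, and you use the rooted upper bound from the proof of Lemma~\ref{lem:calibration} (which needs only the induction hypothesis for $H-u$, not extremality) to gain order $n^{h-1}$ per step. You then invoke the already-proved exact result to bound the number of steps by $o(n)$. Your explicit threshold $\xi$ and the check that $v\ne v_0$ just make precise what the paper handles with an $o(n^{h-1})$ term.
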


\begin{proof}
Fix $n$ and begin with $Q:=G_n$. Call $v\in V(Q)$ bad if
\[
\Delta\left(1-\frac{d_Q(v)}n\right)\ge \frac{31m}{10r}.
\]
Choose $v_0$ maximizing $I_Q(v_0)$. Since every intermediate graph will have at least as many copies of $H$ as $G_n$, double counting and \eqref{eq:near-extremal-sequence} give
\[
I_Q(v_0)\ge \frac{h}{n}\inj(H,Q)
\ge hp_r(H)n^{h-1}-o(n^{h-1}).
\]

The upper-bound part of the proof of Lemma \ref{lem:calibration} did not use extremality of the host. It used only the induction hypothesis for $H-u$. Therefore, if $v$ is bad, the calculation leading to \eqref{eq:sum-deletion-probabilities} and \eqref{eq:degree-loss-calibrated} gives
\[
I_Q(v)\le \left(hp_r(H)-\frac{m}{23r}\right)n^{h-1}+O_{H,r}(n^{h-2}).
\]
Replace $v$ by a twin of $v_0$. The resulting graph $Q'$ is $K_{r+1}$-free and
\[
\inj(H,Q')-\inj(H,Q)\ge I_Q(v_0)-I_Q(v)-O_H(n^{h-2})\ge \frac{m}{25r}n^{h-1}
\]
for sufficiently large $n$.

Repeat until no bad vertex remains. The exact result already proved implies that every intermediate graph has at most $\inj(H,T_r(n))$ copies. If $s_n$ symmetrization steps are performed, then
\[
\frac{m}{25r}s_n n^{h-1}
\le \inj(H,T_r(n))-\inj(H,G_n)=o(n^h),
\]
and hence $s_n=o(n)$. Each step changes at most $2n$ adjacencies, so the total edit distance is $o(n^2)$. The final graph $\widehat G_n$ has the claimed minimum degree.
\end{proof}

For each $n$, set $\widehat G:=\widehat G_n$. Since $\widehat G$ is $K_{r+1}$-free and $\inj(H,\widehat G)\ge\inj(H,G_n)$, the strict Tur\'an-good conclusion gives
\begin{equation}
\label{eq:stability-deficit}
0\le \inj(H,T_r(n))-\inj(H,\widehat G)
\le \inj(H,T_r(n))-\inj(H,G_n)=o(n^h).
\end{equation}
Use the random variable $Y$ from the proof of Lemma \ref{lem:nonedge-density}, with $\widehat G$ as the host graph. As before, $\E Y=mq(\widehat G)$, and Lemma \ref{lem:cleaning} together with the same second-moment calculation gives
\[
\E Y^2\le mq(\widehat G)
\left(1+mq(\widehat G)+\frac{31m}{5r}+o(1)\right).
\]
On the other hand, \eqref{eq:stability-deficit} and the bound $\inj(H,T_r(n))\ge(1-m/r-o(1))(n)_h$ imply
\[
\Prb(Y>0)=1-\frac{\inj(H,\widehat G)}{(n)_h}
\le \frac{m}{r}+o(1).
\]
Combining these estimates with $\Prb(Y>0)\ge(\E Y)^2/\E Y^2$ gives
\[
q(\widehat G)\le \frac1r\left(1+\frac{29m}{4r}\right)+o(1).
\]

Choose a spanning $r$-partite subgraph $\widehat G_0\subseteq\widehat G$ with the maximum number of edges. Let $\widehat\cP=(A_1,\ldots,A_r)$ be its vertex partition, and let $K_{\widehat\cP}:=K_{A_1,\ldots,A_r}$. Define
\[
\widehat L:=|E(\widehat G)\setminus E(\widehat G_0)|,\qquad
\widehat M:=|E(K_{\widehat\cP})\setminus E(\widehat G_0)|,\qquad
\widehat B:=e(T_r(n))-e(K_{\widehat\cP}).
\]
By Theorem \ref{thm:Furedi}, $2\widehat L\le \widehat B+\widehat M$. The completion and balancing estimates proved above, together with this inequality, give
\[
\inj(H,T_r(n))-\inj(H,\widehat G)
\ge 2m\left(\frac12-\frac{1679m}{20r}-o(1)\right)(\widehat B+\widehat M)n^{h-2}.
\]
The coefficient is positive, whereas the left side is $o(n^h)$. Hence
\[
\widehat B+\widehat M=o(n^2).
\]
It follows that $\widehat L=o(n^2)$. Deleting the $\widehat L$ internal edges and adding the $\widehat M$ missing crossing edges transforms $\widehat G$ into the complete $r$-partite graph $K_{\widehat\cP}$ using $o(n^2)$ edits.

If the part sizes of $K_{\widehat\cP}$ are $a_1,\ldots,a_r$, then
\[
\sum_{i=1}^r\left(a_i-\frac nr\right)^2=2\widehat B+O_r(1)=o(n^2).
\]
Since $r$ is fixed, each $a_i=n/r+o(n)$. Moving $o(n)$ vertices between the parts makes the partition balanced and changes $o(n^2)$ adjacencies. Thus
\[
\dist(\widehat G,T_r(n))=o(n^2).
\]
Lemma \ref{lem:cleaning} now gives
\[
\dist(G_n,T_r(n))=o(n^2).
\]

Finally, if the $\varepsilon$--$\delta$ definition of stability failed, then there would be a fixed $\varepsilon>0$ and a sequence $(G_n)$ satisfying \eqref{eq:near-extremal-sequence} whose edit distance from $T_r(n)$ is at least $\varepsilon n^2$, contradicting the sequential conclusion. This proves the stability assertion and completes the proof of Theorem \ref{thm:main}.\qed

\section{Proof of Theorem~\ref{thm:multipartite-counterexample}}
\label{sec:multipartite-proof}

\begin{proof}[Proof of Theorem \ref{thm:multipartite-counterexample}]
Label the vertices in each part of $T_{r-1}((r-1)t)$ by $1,\dots,t$.
The vertices with the same label form $t$
vertex-disjoint copies of $K_{r-1}$. Two vertices in different parts lie
in a common copy of $K_{r-1}$, while two vertices in the same part lie in
two copies sharing $r-2$ vertices. Hence Theorem \ref{thm:GPS} shows
that $T_{r-1}((r-1)t)$ is strictly $K_r$-Tur\'an-good.

Since $T_{r-1}((r-1)t)$ has $h=t(r-1)$ vertices, to prove that it is not
$K_{r+1}$-Tur\'an-good for large $t$, consider integers $n$ divisible by
$r(r-1)$. Both $T_{r-1}(n)$ and $T_r(n)$ are $K_{r+1}$-free and
have equal-sized parts. An injective homomorphism from
$T_{r-1}((r-1)t)$ to $T_{r-1}(n)$ determines a bijection between their
sets of parts.
Consequently,
\begin{equation}
\label{eq:counter-T-rminus1}
  \inj\bigl(T_{r-1}((r-1)t),T_{r-1}(n)\bigr)
  =(r-1)!\left(\left(\frac{n}{r-1}\right)_t\right)^{r-1}
  =(r-1)!(r-1)^{-h}n^h+O(n^{h-1}).
\end{equation}
A proper coloring of $T_{r-1}((r-1)t)$ with $r$ labeled colors assigns
disjoint nonempty sets of colors to its $r-1$ parts. Thus either every
part is monochromatic and one color is unused, or one part uses two
colors and all other parts are monochromatic. In the first case, there
are $r$ choices for the unused color and $(r-1)!$ bijections from the
remaining colors to the parts, giving $r!$ colorings. In the second
case, there are $r-1$ choices for the part using two colors,
$\binom r2$ choices for the pair of colors, and $(r-2)!$ bijections from
the remaining colors to the remaining parts. Finally, the $t$ vertices
in the two-colored part admit $2^t-2$ assignments using both colors,
since the two monochromatic assignments are excluded. Hence the total
number of proper $r$-colorings of $T_{r-1}((r-1)t)$ is
\begin{equation*}
  (r-1)!\left[r+\binom r2(2^t-2)\right].
\end{equation*}
If the color classes have sizes $N_1,\dots,N_r$, then the number of injective
homomorphisms respecting this coloring is
\[
  \prod_{i=1}^r\left(\frac nr\right)_{N_i}
  =r^{-h}n^h+O(n^{h-1}).
\]
Summing over all proper $r$-colorings gives
\begin{equation}
\label{eq:counter-T-r}
  \inj\bigl(T_{r-1}((r-1)t),T_r(n)\bigr)
  =(r-1)!r^{-h}
  \left[r+\binom r2(2^t-2)\right]n^h+O(n^{h-1}).
\end{equation}
Comparing \eqref{eq:counter-T-rminus1} and \eqref{eq:counter-T-r},
we see that $T_{r-1}(n)$ contains more copies of
$T_{r-1}((r-1)t)$ than $T_r(n)$ for all sufficiently large admissible
$n$ whenever
\begin{equation}
\label{eq:counter-inequality}
  \left(\frac r{r-1}\right)^{t(r-1)}
  >r+\binom r2(2^t-2).
\end{equation}
For $r\ge3$, the binomial theorem gives
\[
  \left(\frac r{r-1}\right)^{r-1}
  =\left(1+\frac1{r-1}\right)^{r-1}
  \ge 2+\frac{r-2}{2(r-1)}\ge\frac94.
\]
The left-hand side of \eqref{eq:counter-inequality} is therefore at
least $(9/4)^t$, whereas its right-hand side is at most $r^2 2^t$.
Thus \eqref{eq:counter-inequality} holds whenever
$(9/8)^t>r^2$, which proves Theorem \ref{thm:multipartite-counterexample}.
\end{proof}

For example, $t=\lceil3\log r/\log(9/8)\rceil$ satisfies
$(9/8)^t>r^2$. The resulting graphs have $O(r\log r)$ vertices and give
$\lambda_{\max}(h)=\Omega(h/\log h)$.

\section{Proof of Theorem \ref{thm:core-satellite}}
\label{sec:core-satellite-proof}

Since $v(F_{r,t})=r-1+t<2(r-1)$, we have
$\lfloor v(F_{r,t})/(r-1)\rfloor=1$. The set $C$ induces $K_{r-1}$,
as does $C_i:=(C\setminus\{y_i\})\cup\{z_i\}$ for every
$1\le i\le t$. Moreover, $|C\cap C_i|=r-2$, and every vertex of
$F_{r,t}$ belongs to $C$ or to some $C_i$. Hence Theorem \ref{thm:GPS}
shows that $F_{r,t}$ is strictly $K_r$-Tur\'an-good.

For a graph $H$ and an integer $q\ge\chi(H)$, let
\[
\Delta_q:=\{(x_1,\ldots,x_q)\in[0,1]^q:x_1+\cdots+x_q=1\},
\]
and let $\Col_q(H)$ be the set of proper colorings of $H$ with labeled colors $1,\ldots,q$. Define the \emph{proper-coloring polynomial}
\[
\Phi_{H,q}(x_1,\ldots,x_q)
:=\sum_{\varphi\in\Col_q(H)}\prod_{v\in V(H)}x_{\varphi(v)}.
\]
Write $\boldsymbol{x}:=(x_1,\ldots,x_q)$. Equivalently, if each vertex independently receives color $i$ with probability $x_i$, then $\Phi_{H,q}(\boldsymbol{x})$ is the probability that the resulting coloring is proper.

Suppose that $a_1+\cdots+a_q=n$ and $a_i=x_i n+O(1)$. For a fixed proper coloring $\varphi$, write $N_i:=|\varphi^{-1}(i)|$. Then
\[
\prod_{i=1}^q(a_i)_{N_i}
=n^{v(H)}\prod_{i=1}^q x_i^{N_i}+O_{H,q}(n^{v(H)-1}).
\]
Summing over all proper colorings gives
\begin{equation}
\label{eq:color-polynomial-asymptotic}
\inj(H,K_{a_1,\ldots,a_q})
=n^{v(H)}\Phi_{H,q}(\boldsymbol{x})+O_{H,q}(n^{v(H)-1}).
\end{equation}
Consequently, if $\Phi_{H,q}(\boldsymbol{x})>\Phi_{H,q}(1/q,\ldots,1/q)$ for some $\boldsymbol{x}\in\Delta_q$, then $H$ is not $K_{q+1}$-Tur\'an-good.

Choose a proper $q$-coloring $\varphi$ uniformly at random from $\Col_q(H)$, and let $N_i:=|\varphi^{-1}(i)|$ and $\mathcal Q:=\E\sum_{i=1}^qN_i^2$.

\begin{lemma}
\label{lem:Hessian-coloring}
Let $q\ge 2$, and let $H$ be a $q$-colorable graph on $h$ vertices. If $q\mathcal Q>h(h+q-1)$, then $(1/q,\ldots,1/q)$ is not a local maximum of $\Phi_{H,q}$.
\end{lemma}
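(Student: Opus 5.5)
The plan is a second-order (Hessian) test on the simplex at the barycenter $\boldsymbol c:=(1/q,\ldots,1/q)$. The ingredients are symmetry under permuting the $q$ colors and a trace computation of the Hessian restricted to the tangent space $T:=\{\boldsymbol y\in\mathbb R^q:\sum_i y_i=0\}$ of $\Delta_q$.

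\emph{Step 1 (first order vanishes).} Permuting colors maps $\Col_q(H)$ bijectively to itself, so $\Phi_{H,q}$ is a symmetric polynomial. Hence its gradient at $\boldsymbol c$ is a multiple of $(1,\ldots,1)$, and the directional derivative along every $\boldsymbol y\in T$ is zero. Consequently, if $\boldsymbol c$ were a local maximum of $\Phi_{H,q}$ on $\Delta_q$, the quadratic form $\boldsymbol y\mapsto \boldsymbol y^{\top}\nabla^2\Phi_{H,q}(\boldsymbol c)\boldsymbol y$ would be negative semidefinite on $T$. Since $\boldsymbol c$ is interior to $\Delta_q$, small moves $\boldsymbol c+\varepsilon\boldsymbol y$ stay in $\Delta_q$, and a Taylor expansion gives $\Phi_{H,q}(\boldsymbol c+\varepsilon\boldsymbol y)=\Phi_{H,q}(\boldsymbol c)+\tfrac{\varepsilon^2}{2}\boldsymbol y^{\top}\nabla^2\Phi_{H,q}(\boldsymbol c)\boldsymbol y+O(\varepsilon^3)$. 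It therefore suffices to exhibit some $\boldsymbol y\in T$ with a positive value of this form.

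\emph{Step 2 (Hessian entries).} I will differentiate each monomial $\prod_i x_i^{N_i}$ twice and evaluate at $\boldsymbol c$, where every monomial equals $q^{-h}$. This gives
\[
\partial_i\partial_j\Phi_{H,q}(\boldsymbol c)=q^{2-h}\sum_{\varphi\in\Col_q(H)}\bigl(N_iN_j-\delta_{ij}N_i\bigr)=q^{2-h}|\Col_q(H)|\,\E\bigl[N_iN_j-\delta_{ij}N_i\bigr],
\]
with $\varphi$ uniform in $\Col_q(H)$ as in the definition of $\mathcal Q$. Call this matrix $M$.

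\emph{Step 3 (trace on $T$).} Let $P:=I-\tfrac1q\mathbf 1\mathbf 1^{\top}$ be the orthogonal projection onto $T$. Then $\operatorname{tr}(PMP)=\operatorname{tr}M-\tfrac1q\mathbf 1^{\top}M\mathbf 1$. Using $\sum_i N_i=h$, the expectation form of the entries gives $\operatorname{tr}M\propto \E\sum_iN_i^2-h=\mathcal Q-h$ and $\mathbf 1^{\top}M\mathbf 1\propto h^2-h$. Therefore
\[
\operatorname{tr}(PMP)=q^{1-h}|\Col_q(H)|\bigl(q\mathcal Q-h(h+q-1)\bigr).
\]
Here $|\Col_q(H)|>0$ by $q$-colorability. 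Under the hypothesis this trace is positive, so the symmetric operator $PMP$ restricted to $T$ has a positive eigenvalue. The corresponding eigenvector $\boldsymbol y\in T$ satisfies $\boldsymbol y^{\top}M\boldsymbol y>0$, contradicting Step 1.

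The only delicate point is the bookkeeping in Step 2, namely the diagonal term $N_i(N_i-1)$ versus $N_i^2$. This is precisely what produces the $-h$ and $+h$ corrections that combine into $h(h+q-1)$. Otherwise the argument is routine.
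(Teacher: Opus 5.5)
Your proof is correct, but it certifies the positive second-order term differently from the paper. The paper restricts $\Phi_{H,q}$ to the explicit segment $\boldsymbol x(\tau)=(u+\tau,u-\tau,u,\ldots,u)$. It computes $F''(0)$ as a positive multiple of $\E\bigl[(N_1-N_2)^2-N_1-N_2\bigr]$, using the color symmetry of the uniform random proper coloring to evaluate $\E N_1^2=\mathcal Q/q$ and $\E(N_1N_2)=(h^2-\mathcal Q)/(q(q-1))$. You instead take the trace of the Hessian $M$ compressed to $T$, a computation that needs only $\sum_i N_i=h$, and then pass to an eigenvector. The two quantities are really the same. Because $\Phi_{H,q}$ is symmetric and $\boldsymbol c$ is fixed by every coordinate permutation, $M$ commutes with all permutation matrices, so $M=\alpha I+\beta\mathbf 1\mathbf 1^{\top}$ and $PMP=\alpha P$. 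Thus your trace equals $(q-1)\alpha$, while the paper's $F''(0)$ equals $(\boldsymbol e_1-\boldsymbol e_2)^{\top}M(\boldsymbol e_1-\boldsymbol e_2)=2\alpha$. In fact every nonzero $\boldsymbol y\in T$ is an increasing direction, so your eigenvector step could be dropped. Your route is basis-free and uses the color symmetry only to kill the first-order term. The paper's route is one-variable calculus and exhibits a concrete better complete $q$-partite host, namely one with part sizes proportional to $(u+\tau,u-\tau,u,\ldots,u)$.
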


\begin{proof}
Let $u:=1/q$. For $|\tau|<u$, define
\[
\boldsymbol{x}(\tau):=(u+\tau,u-\tau,u,\ldots,u),
\qquad
F(\tau):=\Phi_{H,q}(\boldsymbol{x}(\tau)).
\]
For $\varphi\in\Col_q(H)$, define
\[
f_\varphi(\tau):=(u+\tau)^{N_1}(u-\tau)^{N_2}u^{h-N_1-N_2}.
\]
Thus $F(\tau)=\sum_{\varphi\in\Col_q(H)}f_\varphi(\tau)$. At $\tau=0$,
\[
f'_\varphi(0)=u^{h-1}(N_1-N_2),\qquad
f''_\varphi(0)=u^{h-2}\left((N_1-N_2)^2-N_1-N_2\right).
\]
Interchanging colors $1$ and $2$ gives $F'(0)=0$. By symmetry,
\[
\E N_1=\frac hq,\qquad \E N_1^2=\frac{\mathcal Q}q.
\]
Moreover,
\[
h^2=\E\left(\sum_{i=1}^qN_i\right)^2
=\mathcal Q+q(q-1)\E(N_1N_2),
\]
so
\[
\E(N_1N_2)=\frac{h^2-\mathcal Q}{q(q-1)}.
\]
It follows that
\[
\E\left((N_1-N_2)^2-N_1-N_2\right)
=\frac{2}{q(q-1)}\left(q\mathcal Q-h(h+q-1)\right).
\]
The hypothesis gives $F''(0)>0$, and hence the uniform point is not a local maximum.
\end{proof}

\begin{proof}[Proof of Theorem \ref{thm:core-satellite}]
Choose a proper $r$-coloring of $F_{r,t}$ uniformly at random. The clique $C$ receives $r-1$ distinct colors. Since $\sum_jN_j^2$ is invariant under permuting color labels, label the colors $c_1,\ldots,c_r$ so that $y_i$ receives $c_i$ for $1\le i\le r-1$ and $c_r$ is unused on $C$. Each $z_i$ may receive only $c_i$ or $c_r$. All $2^t$ assignments are proper and equally likely.

For $1\le i\le t$, let $X_i=1$ if $z_i$ receives $c_i$, and let $X_i=0$ otherwise. Define $S:=\sum_{i=1}^tX_i$. Then
\[
N_i=1+X_i\quad(1\le i\le t),
\qquad
N_i=1\quad(t<i\le r-1),
\qquad
N_r=t-S.
\]
The variables $X_i$ are independent, with $\Prb(X_i=0)=\Prb(X_i=1)=1/2$. Hence
\[
\E S=\frac t2,\qquad
\E S^2
=\sum_{i=1}^t\E X_i^2
+2\sum_{1\le i<j\le t}\E(X_iX_j)
=\frac{t(t+1)}4.
\]
Since $X_i^2=X_i$, we also have $\E(1+X_i)^2=5/2$ and $\E(t-S)^2=t(t+1)/4$. Therefore
\[
\mathcal Q=\E\sum_{j=1}^rN_j^2
=\frac52t+(r-1-t)+\frac{t(t+1)}4
=r-1+\frac74t+\frac14t^2.
\]
With $h=v(F_{r,t})=r-1+t$, the condition in Lemma \ref{lem:Hessian-coloring} for $q=r$ is equivalent to $P_r(t)>0$, where
\[
P_r(t):=4\left(r\mathcal Q-h(h+r-1)\right)
=(r-4)t^2-(5r-12)t-4r^2+12r-8.
\]

For $r\ge 17$, let $s:=\sqrt{r-1}\ge 4$ and define $t_0(r):=\lceil 2s+4\rceil$. Then $t_0(r)\le 2s+5<s^2=r-1$.
For $x\ge 2s+4$,
\[
P_r'(x)-P_r'(2s+4)=2(r-4)(x-2s-4)\ge 0.
\]
A direct calculation gives
\[
P_r'(2s+4)=4s^3+3s^2-12s-17>0
\]
for $s\ge 4$, and
\[
P_r(2s+4)=6s^3-12s^2-34s-20>0
\]
for $s\ge 4$. Hence $P_r$ is increasing and positive on $\left[2s+4,\infty\right)$. Thus $P_r(t)>0$ whenever $t_0(r)\le t<r-1$.
Lemma \ref{lem:Hessian-coloring} and \eqref{eq:color-polynomial-asymptotic} show that $F_{r,t}$ is not $K_{r+1}$-Tur\'an-good. This proves the theorem.
\end{proof}

For the lower bound in Theorem \ref{thm:threshold-range}, let $h$ be
sufficiently large, let $r$ be the largest integer satisfying
$r-1+t_0(r)\le h$, and set $t:=t_0(r)$. The graph $F_{r,t}$ has at
most $h$ vertices and is strictly $K_r$-Tur\'an-good but not
$K_{r+1}$-Tur\'an-good. Hence $\lambda_{\max}(h)\ge r+1$. The
maximality of $r$ gives
\[
r-1+t_0(r)\le h<r+t_0(r+1).
\]
Since $t_0(r)=2\sqrt r+O(1)$, it follows that
$h=r+2\sqrt r+O(1)$ and therefore $r=h-2\sqrt h+O(1)$. Consequently,
$\lambda_{\max}(h)\ge h-2\sqrt h-O(1)$, proving the lower bound in
Theorem \ref{thm:threshold-range}. The upper bound follows from
Theorem \ref{thm:main} and $e(H)\le \binom h2$.

\section{Concluding remarks}

Theorem \ref{thm:main} gives the sufficient condition $r\ge168e(H)$, and hence a linear eventual Tur\'an-good and stability threshold for every graph family with $e(H)=O(v(H))$. It is important, however, that its proof establishes an unconditional statement: it proves directly that $H$ is $K_{r+1}$-Tur\'an-good for every $r\ge168e(H)$ and never uses the hypothesis that $H$ is already $K_r$-Tur\'an-good. 
By contrast, the definition of $\lambda(H)$ requires only that, for every $r\ge \lambda(H)$, if $H$ is $K_r$-Tur\'an-good, then it is $K_{r+1}$-Tur\'an-good.
Thus the upper bound supplied by Theorem \ref{thm:main} comes from solving a stronger problem, and the true monotonicity threshold may be substantially smaller. Since $e(H)$ can be quadratic in $v(H)$, it remains open whether the universal monotonicity threshold is $(1+o(1))h$.

\begin{problem}
\label{prob:linear-lambda}
Is $\lambda_{\max}(h)=(1+o(1))h$?
\end{problem}

The lower bound in Theorem \ref{thm:threshold-range} shows that a linear result, if true, would be asymptotically best possible in its order and in the leading constant suggested by the following stronger conjecture of Morrison, Nir, Norin, Rz\k{a}\.zewski and Wesolek \cite{MorrisonEtAl}.

\begin{problem}
\label{prob:vplusone}
Is every graph $H$ $K_{r+1}$-Tur\'an-good whenever $r\ge v(H)+1$?
\end{problem}

An affirmative answer would imply $\lambda_{\max}(h)\le h+1$, while Theorem \ref{thm:threshold-range} shows that the leading constant $1$ would be best possible.

The stability conclusion of Theorem \ref{thm:main} uses the same condition $r\ge 168e(H)$ as the strict extremal conclusion. It is natural to ask whether an analogous result holds for $F$-free graphs with $\chi(F)=r+1$. The present proof uses Theorem \ref{thm:Furedi} to obtain an $r$-partite subgraph, so a different structural input would be needed for a general forbidden graph. Related results were obtained by Gerbner \cite{GerbnerStability,GerbnerWeak} and by Ma and Qiu \cite{MaQiu}.

Following Gerbner \cite{GerbnerWeak}, we say that $H$ is \emph{weakly $K_{r+1}$-Tur\'an-good} if, for every sufficiently large $n$, $\ex(n,H,K_{r+1})=N(H,T)$ for some complete $r$-partite $n$-vertex graph $T$. The construction in Theorem \ref{thm:core-satellite} does not disprove monotonicity for this weaker notion, since its competing graphs are themselves complete $r$-partite. Nevertheless, Theorem \ref{thm:main} implies that the corresponding monotonicity threshold is at most $168e(H)$.

In subsequent work, the present authors prove that there exists an absolute
constant \(C>0\) such that every graph \(H\) with at least one edge is
\(K_{r+1}\)-Tur\'an-good whenever \(r\ge C v(H)\). Thus the general sufficient
threshold admits an upper bound that is linear in the number of vertices of
\(H\). This result and Theorem~\ref{thm:main} are complementary. The
vertex-linear bound is stronger for dense graphs, whereas the explicit bound
\(r\ge 168e(H)\) obtained here is sharper when \(H\) is sparse.
\section*{Acknowledgments}

The authors thank D\'aniel Gerbner for helpful discussions and valuable suggestions. The authors also acknowledge the use of OpenAI's ChatGPT during the preparation of the manuscript. It was used to improve the language, organization, and presentation of the manuscript and to assist in revising preliminary drafts. In particular, the construction in Theorem \ref{thm:core-satellite} arose from an interaction with ChatGPT. The authors take full responsibility for all statements, proofs, constructions, citations, and conclusions presented in this manuscript.

\end{document}